\documentclass[11pt]{article}
\usepackage{amsmath, amssymb, amsthm}
\usepackage{geometry}
\usepackage{booktabs}
\usepackage{array}
\usepackage[hypertexnames=false]{hyperref}
\usepackage{microtype}
\usepackage{xcolor}
\usepackage{graphicx}
\usepackage{algorithm}
\usepackage{algorithmic}
\usepackage{adjustbox}
\graphicspath{{figures/}}
\title{Spectral Kernel Dynamics for Planetary Surface Graphs: Distinction Dynamics and Topological Conservation}
\author{Jnaneshwar Das\\
School of Earth and Space Exploration, Arizona State University\\
\texttt{jnaneshwar.das@asu.edu}}
\date{}

\newtheorem{definition}{Definition}
\newtheorem{proposition}{Proposition}
\newtheorem{theorem}{Theorem}
\newtheorem{corollary}{Corollary}

\newtheorem{remark}{Remark}
\newtheorem{lemma}{Lemma}

\begin{document}
\maketitle

\begin{abstract}
The spectral kernel field equation $\mathcal{R}[k]=\mathcal{T}[k]$
lacks a conservation-law analog.
We prove (i) the fixed-point flow is strictly volume-expanding ($\mathrm{tr}\,DF>0$),
precluding automatic conservation, and (ii) the conservation deficit per mode
equals the Hessian stability margin exactly: $D_m=-\Delta'$.
Closing the deficit requires a scene-side compensating contribution,
which we formalise as the \emph{distinction dynamics equation}
$d\mathbf{c}/dt=\mathcal{G}[\mathbf{c},h_t]$,
with MaxCal-optimal realisation $\mathcal{G}_{\mathrm{opt}}$.
On fixed-topology 3D surface graphs we derive a conditional
topology-preserving compression theorem: retaining $k\geq\beta_0+\beta_1$ modes
(under a spectral-ordering assumption) preserves all Betti-number charges; we
include a worked short-cycle counterexample (figure-eight) calibrating when the
assumption fails.
A triple \emph{necessary} spectral diagnostic --- Fiedler-mode concentration,
elevated curl energy, anomalous $\beta_1$ --- is derived for planetary drainage
networks at $O(N)$ cost.
Two internal real-data sequences serve as preliminary consistency checks; full
benchmarks and adaptive-topology extensions are deferred.
\end{abstract}

\section{Introduction}
\label{sec:intro}

The spectral kernel field equation
$\mathcal{R}[k] = \mathcal{T}[k]$~\cite{das2026spectral,das2026kernel}
has no built-in conservation law: the source $\mathcal{T}[k]$ depends on
the kernel \emph{and} on external scene-side quantities that have their own
dynamics.
An equation of motion for these \emph{distinction fields} --- the
\emph{distinction dynamics equation} --- is the missing ingredient;
this paper derives it and proves a precise negative result about the
conservation law.

\paragraph{Why planetary surfaces.}
A lunar rover or Mars orbiter must maintain a 3D digital twin over a
kilobits-per-second uplink with no interactive correction.
These SWaP constraints force every representational choice to be
thermodynamically justified, making the MaxCal framework an operational
necessity rather than an analogy.
The core requirement is \emph{scene-faithful} updating: the twin
should not register structural change unless supported by scene dynamics,
new observations, or explicit compression costs.
Planetary surfaces also carry geological history in their topology ---
craters ($\beta_2$), drainage networks ($\beta_1$), connected terrain
($\beta_0$) --- so topology-preserving compression is the condition
that geomorphological signals survive transmission to Earth.

\paragraph{Contributions.}
Table~\ref{tab:tiers} classifies all results by epistemic tier.
The three Tier-1 (proved) results are the distinction dynamics equation
(Def.~\ref{def:matter-eq}), the stability--conservation tradeoff $D_m=-\Delta'$
(Prop.~\ref{prop:stab-cons}), and the conditional topological conservation theorem
(Thm.~\ref{thm:topo-conservation}, conditional on A1--A3).
The three Tier-2 (structural / necessary diagnostic) results are the Hodge source
decomposition (Prop.~\ref{prop:hodge-decomp}), the river-channel spectral signature
(Prop.~\ref{prop:channel-sig}, \emph{necessary} not sufficient), and the
bandwidth-constrained protocol (Alg.~\ref{alg:planetary}).

\begin{table}[tbp]
\centering
\footnotesize
\caption{Claim tiers in this paper.}
\label{tab:tiers}
\begin{tabular}{@{}>{\raggedright\arraybackslash}p{1.45cm}>{\raggedright\arraybackslash}p{5.75cm}@{}}
\toprule
\textbf{Tier} & \textbf{Claims} \\
\midrule
1: Proved &
Jacobian divergence (Prop.~\ref{prop:jacobian-div});
Stability--conservation tradeoff $D_m=-\Delta'$
(Prop.~\ref{prop:stab-cons});
Conditional Topological Conservation Theorem
(Thm.~\ref{thm:topo-conservation}) \\[4pt]

2: Structural correspondence /
necessary diagnostic &
Hodge source decomposition
(Prop.~\ref{prop:hodge-decomp}): strongest form in the mode-separable case;
River channel spectral signature
(Prop.~\ref{prop:channel-sig}): necessary under stated graph assumptions,
not sufficient;
Bandwidth protocol (Alg.~\ref{alg:planetary}): theorem-informed design template
with empirical deployment thresholds \\[4pt]

3: Hypothesis / open extension &
Full general $L_0$--$L_1$ topology bridge
(candidate route via homotopy word invariants~\cite{bhattacharya2017});
Parametric A2-failure criterion (Remark~\ref{rem:a2-counterexample}):
conjectured $\rho(k)\geq 1 - C_1/(\ell_{\min}^{\,2}\delta_k) - C_2(\gamma-1)$;
constants open (Q30);
open problems Q10/Q12/Q14 (topology, Riccati gain);
extended directions Q19--Q30 in Appendix~B \\
\bottomrule
\end{tabular}
\end{table}

$D_m$ as a leakage indicator (Def.~\ref{def:rep-leakage}) and
$D_t$ as a deployment surrogate are Tier-3 hypotheses, not theorems
(Section~\ref{sec:limits}).

\paragraph{What this paper does \emph{not} claim.}
\begin{enumerate}
\item \textbf{Not a reconstruction-quality method.}
This paper addresses topology preservation under bandwidth constraints,
not surface reconstruction fidelity.
Methods such as Octomap, 3DGS, and NeRF remain superior at geometric RMSE
and are complementary, not competing, objectives.
\item \textbf{Not a replacement for Horton--Strahler or D8 flow accumulation.}
The triple spectral diagnostic (Prop.~\ref{prop:channel-sig}) operates at
$O(N)$ on the graph spectrum and is complementary to standard geomorphological
tools; it is not validated as a substitute for DEM-based channel extraction.
\item \textbf{The Gaussian MI source is a verification vehicle, not a sensor model.}
All Tier-1 results (Prop.~\ref{prop:stab-cons}, Thm.~\ref{thm:topo-conservation})
hold for any mode-separable source.
The specific Gaussian MI source $\mathcal{T}_l = \mu_2 w_l / (2(\sigma^2+h_l))$
is used for numerical verification on $P_8$ and does not represent
a calibrated planetary sensor model.
\end{enumerate}

\section{Background}
\label{sec:background}

\subsection{Spectral Kernel Field Equation}
\label{sec:bg-field-eq}

Paper~1~\cite{das2026spectral} develops the spectral-kernel MaxCal setup in
full.  All proofs in this paper rely only on the definitions in this section;
numerical values cited from Paper~1 ($h^*\approx 0.1547$, $\Delta'=5.71$)
are used for experimental verification, not for the formal arguments.

\paragraph{GR analogy (one-time note).}
Following Paper~1, we use Einstein's field equations as a \emph{notation template}
and motivational parallel --- not an equivalence.
The contracted Bianchi identity ($\nabla_\mu G^{\mu\nu}\equiv 0$) is a universal
geometric theorem; the analog explored here is domain-specific and conditional.
This caveat applies to all GR-adjacent language in the paper; it is stated once
here and not repeated at each occurrence.

Here we use only the minimal ingredients:
a Laplacian eigenbasis representation
$K_h=\Phi\,\mathrm{diag}(h)\,\Phi^\top$ with
$h\in\mathbb{R}_{>0}^N$, and the field equation
\begin{equation}
  \mathcal{R}_l[h] = \mathcal{T}_l[h] \quad \forall\, l,
  \label{eq:field}
\end{equation}
with geometric term
\begin{equation}
  \mathcal{R}_l[h] = -\log \frac{h(\lambda_l)}{h_0(\lambda_l)} - 1,
  \label{eq:R}
\end{equation}
and fixed point
\begin{equation}
  h^*(\lambda_l) = h_0(\lambda_l)\exp\!\bigl(-1 - \mathcal{T}_l[h^*]\bigr).
  \label{eq:fixed-point}
\end{equation}
The source $\mathcal{T}[k]$ carries the Landauer/MI/KL terms from
Paper~1~\cite{das2026spectral}, and the Fisher--Rao metric is diagonal:
$I_{ll'}(h) = \frac{1}{2} h(\lambda_l)^{-2}\delta_{ll'}$.
Boltzmann, Planck, and Cowan--Farquhar distributions are special cases
of~\eqref{eq:fixed-point} under specific source conditions
(Appendix~\ref{app:unification}).

\subsection{Hodge Laplacian Complex on 3D Surface Graphs}
\label{sec:bg-hodge}

Let $G_{\mathrm{3D}}$ be the graph induced by a 3D triangle mesh or point
cloud with vertex set $V$, edge set $E$, and face set $F$
(see~\cite{lim2020hodge} for an extensive treatment and~\cite{schaub2021signal}
for the signal-processing perspective on higher-order networks).  Define boundary
operators $B_1 \in \mathbb{R}^{|V| \times |E|}$ (vertex--edge incidence) and
$B_2 \in \mathbb{R}^{|E| \times |F|}$ (edge--face incidence), satisfying the
exact sequence condition $B_1 B_2 = 0$.  The three Hodge Laplacians are:
\begin{align}
  L_0 &= B_1^\top B_1 \in \mathbb{R}^{|V|\times|V|},
  \label{eq:L0}\\
  L_1 &= B_1 B_1^\top + B_2^\top B_2 \in \mathbb{R}^{|E|\times|E|},
  \label{eq:L1}\\
  L_2 &= B_2 B_2^\top \in \mathbb{R}^{|F|\times|F|}.
  \label{eq:L2}
\end{align}
$L_0$ is the combinatorial vertex Laplacian used in the kernel field
equation~\eqref{eq:field}.  The Hodge decomposition theorem states that
every edge signal $f \in \mathbb{R}^{|E|}$ decomposes orthogonally as:
\begin{equation}
  f = \underbrace{B_1^\top \psi}_{\text{gradient}}
    + \underbrace{B_2\,\omega}_{\text{curl}}
    + \underbrace{f_{\mathrm{harm}}}_{\text{harmonic}},
  \label{eq:hodge-decomp}
\end{equation}
where $\psi \in \mathbb{R}^{|V|}$ is a vertex potential, $\omega \in
\mathbb{R}^{|F|}$ is a face circulation, and $f_{\mathrm{harm}} \in \ker(L_1)$
is the harmonic component~\cite{lim2020hodge}.

The Betti numbers $\beta_0 = \dim\ker(L_0)$, $\beta_1 = \dim\ker(L_1)$,
$\beta_2 = \dim\ker(L_2)$ count connected components, independent cycles
(handles/loops), and enclosed voids respectively.  They satisfy the
Euler characteristic identity $\beta_0 - \beta_1 + \beta_2 = |V| - |E| + |F|$.

\begin{remark}[Fiedler value and $\beta_0$]
  The Fiedler value $\lambda_1 > 0$ is the algebraic connectivity of $G$.
  As $\lambda_1 \to 0$ (network fragmentation), $\dim\ker(L_0)$ increases
  from 1 toward $\beta_0 > 1$, and the spectral entropy early-warning
  signal of~\cite{das2026spectral} fires.  The Betti number $\beta_0$ is
  the topological integer that $\lambda_1$ approaches from above.
\end{remark}

\subsection{Related Work}
\label{sec:related}

The spectral kernel framework treats $h(\lambda_l)$ as a MaxCal-governed
dynamical variable~\cite{jaynes1980maxcal,ghosh2020maxcal,das2026spectral},
distinct from fixed-filter GSP~\cite{shuman2013gsp,hammond2011wavelets,spielman2011spectral}.
We extend topological signal processing on simplicial
complexes~\cite{barbarossa2020tsp,schaub2021signal,lim2020hodge} by
connecting the Hodge decomposition to an information-theoretic source functional,
and derive a topology-preserving compression threshold complementary to
filtration-persistence approaches~\cite{edelsbrunner2010,zomorodian2005}.

\section{The Distinction Dynamics Equation}
\label{sec:matter-eq}

\subsection{Motivation: Dissipation at the Fixed Point}
\label{sec:dissipation}

The Jacobian of the fixed-point map $F_l(h) = h_0(\lambda_l)\exp(-1-\mathcal{T}_l(h))$
has entries:
\begin{equation}
  \frac{\partial F_l}{\partial h(\lambda_m)} =
  -h^*(\lambda_l)\,\frac{\partial \mathcal{T}_l}{\partial h(\lambda_m)}\biggr|_{h^*}.
  \label{eq:jacobian}
\end{equation}

\begin{proposition}[Jacobian divergence at the fixed point]
\label{prop:jacobian-div}
For the mode-separable Gaussian mutual-information (Gaussian MI) source
\begin{equation}
  \mathcal{T}_l[h] = \frac{\mu_2 w_l}{2(\sigma^2 + h(\lambda_l))}
  \label{eq:gmi}
\end{equation}
with $\mu_2, w_l, \sigma^2 > 0$, the divergence of $F$ at $h^*$ is:
\begin{equation}
  \mathrm{tr}(DF)|_{h^*}
  = -\sum_l h^*(\lambda_l)\,\frac{\partial\mathcal{T}_l}{\partial h(\lambda_l)}\biggr|_{h^*}
  = \sum_l \frac{\mu_2 w_l\, h^*(\lambda_l)}{2(\sigma^2 + h^*(\lambda_l))^2} > 0.
  \label{eq:trace-DF}
\end{equation}
The MaxCal fixed-point flow is strictly volume-expanding in $\mathcal{K}_{\mathrm{graph}}$
near the fixed point and dissipative in the sense that $|\det(DF)| < 1$
(contraction ratio $\rho < 1$, consistent with convergence).
\end{proposition}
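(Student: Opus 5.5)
The computation is a direct differentiation followed by a sign check, so I will proceed in three steps: compute the Jacobian, take its trace, and then address the determinant claim separately.

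\textbf{Step 1: the Jacobian is diagonal.} The source \eqref{eq:gmi} is mode-separable: $\mathcal{T}_l$ depends only on $h(\lambda_l)$. Hence $\partial\mathcal{T}_l/\partial h(\lambda_m)=0$ for $m\neq l$, and by \eqref{eq:jacobian} $DF$ is diagonal at $h^*$. Differentiating \eqref{eq:gmi} gives
\[
\partial\mathcal{T}_l/\partial h(\lambda_l) = -\mu_2 w_l/\bigl(2(\sigma^2+h(\lambda_l))^2\bigr).
\]
Evaluating at the fixed point uses $F_l(h^*)=h^*(\lambda_l)$ from \eqref{eq:fixed-point}; this is what lets the prefactor $h_0\exp(-1-\mathcal{T}_l)$ collapse to $h^*(\lambda_l)$ in \eqref{eq:jacobian}. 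The diagonal entries are therefore
\[
(DF)_{ll}=\frac{\mu_2 w_l\,h^*(\lambda_l)}{2(\sigma^2+h^*(\lambda_l))^2}.
\]

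\textbf{Step 2: the trace is strictly positive.} Summing the diagonal entries yields both equalities in \eqref{eq:trace-DF}. Every factor is strictly positive: $\mu_2,w_l,\sigma^2>0$ by hypothesis, and $h^*(\lambda_l)>0$ because $h\in\mathbb{R}^N_{>0}$ and the exponential in \eqref{eq:fixed-point} is positive. So each summand is positive and the trace is strictly positive, which is the volume-expansion statement for the linearised flow $\dot h = F(h)-h$ read as the field $F$.

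\textbf{Step 3: the determinant bound (main obstacle).} The claim $|\det DF|<1$ does not follow from positivity of the trace. Since $DF$ is diagonal, $\det DF=\prod_l a_l$ with
\[
a_l=\frac{\mu_2 w_l h^*_l}{2(\sigma^2+h^*_l)^2}.
\]
Using $(\sigma^2+h)^2\ge 4\sigma^2 h$, I would bound
\[
a_l\le \frac{\mu_2 w_l}{8\sigma^2}.
\]
This gives $\rho=\max_l a_l<1$, and hence $|\det DF|<1$, whenever $\mu_2 w_l<8\sigma^2$ for all $l$. This is a condition on the parameters, not something the hypotheses of the proposition supply.

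The same condition makes $F$ a contraction on the positive orthant, so by Banach the iteration $h\mapsto F(h)$ converges to the unique fixed point $h^*$. That is the sense of ``consistent with convergence.''

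I therefore expect to state this part under that explicit assumption. For the numerical verification on $P_8$, I would check it directly against the values cited from Paper~1, rather than claim it unconditionally. The consistency of this part with the strict trace positivity of Step 2 also needs care: the two hold together only because a positive trace and contracting diagonal entries are compatible, namely when all $a_l\in(0,1)$. I will point this out explicitly to reconcile ``volume-expanding'' with ``dissipative.''
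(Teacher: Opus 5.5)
Your Steps~1--2 are the paper's proof. You substitute $\partial\mathcal{T}_l/\partial h(\lambda_l)=-\mu_2 w_l/(2(\sigma^2+h(\lambda_l))^2)$ into the Jacobian formula, use $F_l(h^*)=h^*(\lambda_l)$, and sum over $l$. This establishes the displayed trace identity and its strict positivity.

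The paper gives no argument for the clause $|\det DF|<1$. You are right that it does not follow from $\mu_2,w_l,\sigma^2>0$. Take $h_0(\lambda_l)=\sigma^2\exp\bigl(1+\mu_2 w_l/(4\sigma^2)\bigr)$. Then $h^*(\lambda_l)=\sigma^2$ is a fixed point with $a_l=\mu_2 w_l/(8\sigma^2)$, which exceeds $1$ once $\mu_2 w_l>8\sigma^2$.

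A sharper way to name the missing hypothesis is
\[
H_{ll}=-\frac{1}{h^*_l}-\frac{\partial\mathcal{T}_l}{\partial h(\lambda_l)}\bigg|_{h^*}=\frac{a_l-1}{h^*_l}.
\]
So $\rho=\max_l a_l<1$, and hence $|\det DF|<1$, holds \emph{exactly} when $h^*$ is strictly stable in the sense of Lemma~\ref{lem:hessian}. Your AM--GM condition $\mu_2 w_l<8\sigma^2$ is the same discriminant as Proposition~\ref{prop:stab-cons}(iv); it is sufficient but not necessary.

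On the reconciliation: the flow $\dot h=F(h)-h$ you mention has divergence $\mathrm{tr}\,DF-N<0$ when all $a_l<1$. The positive trace is ``expansion'' only when $F$ is read as a velocity field. The iteration map $h\mapsto F(h)$ contracts volume by the factor $\det DF$. The two claims therefore refer to different objects.

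One claim in Step~3 is wrong: $\mu_2 w_l<8\sigma^2$ does not make $F$ a contraction on the positive orthant. Your bound on $a_l$ used $F_l(h^*)=h^*_l$. At a general $h$, $\partial F_l/\partial h(\lambda_l)=F_l(h)\,\mu_2 w_l/(2(\sigma^2+h_l)^2)$ with $F_l(h)\neq h_l$, and this is proportional to $h_0$. For instance, $\sigma^2=1$, $\mu_2 w_l=2$, $h_0(\lambda_l)=100$ satisfies your condition but gives derivative $100e^{-2}>1$ at $h_l=0$. So Banach does not apply.

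What the condition does give is local convergence from $\rho(DF(h^*))<1$. Per mode, it also gives uniqueness and global convergence by monotonicity:
\begin{enumerate}
\item $F_l$ is increasing in $h_l$.
\item Every zero of $h-F_l(h)$ has slope $1-a_l>0$, so there is exactly one zero.
\item Iterates therefore approach it monotonically from either side.
\end{enumerate}
Replace the Banach sentence with this argument.
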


\begin{proof}
  Direct substitution of $\partial\mathcal{T}_l/\partial h(\lambda_l) =
  -\mu_2 w_l / (2(\sigma^2+h(\lambda_l))^2) < 0$ into~\eqref{eq:jacobian}
  and summing over $l$.
\end{proof}

\begin{remark}[MaxCal as optimal control; conservation as dual feasibility]
\label{rem:control-theory}
The MaxCal path entropy $S_{\mathrm{path}}[h_\cdot]$ can be read as the
negative cost-to-go of an optimal control problem on
$\mathcal{K}_{\mathrm{graph}}$, with the fixed-point
condition~\eqref{eq:fixed-point} \emph{heuristically identified} with a
Bellman optimality equation.  Under this identification the conservation
law $\nabla_\mathcal{K}\mathcal{T}_k = 0$ becomes the KKT dual
feasibility condition, and Proposition~\ref{prop:stab-cons} says the
field equation alone does not supply it: the optimal controller
$\mathcal{G}_{\mathrm{opt}}$ is required.
A consequence for monitoring system design: \emph{a self-consistent
kernel (stable fixed point) is not necessarily scene-faithful} --- it may
leak information at rate $\Delta'$ per step
(Definition~\ref{def:rep-leakage}).  Only when the physical process runs
$\mathcal{G}_{\mathrm{opt}}$ does the leakage close to zero.
This control-theoretic reading is interpretive (see the GR analogy note in
Section~\ref{sec:bg-field-eq}); the proved algebraic results
(Propositions~\ref{prop:jacobian-div}--\ref{prop:stab-cons}) do not depend on it.
\end{remark}

\begin{lemma}[Hessian of $\mathcal{J}$ at the fixed point; reproduced from Paper~1, Corollary~3~\cite{das2026spectral}]
\label{lem:hessian}
In spectral coordinates, at any critical point $h^*$ of the MaxCal functional
$\mathcal{J}[k]$, the second variation along perturbation $\xi$ is
$\delta^2\mathcal{J}[h^*;\xi]=\sum_{l,m}\xi_l H_{lm}\xi_m$ with
\begin{equation}
  H_{lm}
  = -\frac{\delta_{lm}}{h^*(\lambda_l)}
    -\frac{\partial\mathcal{T}_l}{\partial h(\lambda_m)}\bigg|_{h^*}.
  \label{eq:hessian}
\end{equation}
Local stability holds iff $H\prec 0$.  In the mode-separable case
($\partial\mathcal{T}_l/\partial h(\lambda_m)=0$ for $m\neq l$) this
reduces to the per-mode criterion
$\partial\mathcal{T}_l/\partial h(\lambda_l)|_{h^*}>-1/h^*(\lambda_l)$
for all $l$, and the diagonal Hessian gap is
$\Delta(h^*)=\min_l(-H_{ll})$.
For the Gaussian MI source~\eqref{eq:gmi} with $\sigma^2=1$, $\mu_2=2$,
$w_l=1$ on $P_8$: $h^*\approx 0.1547\,\mathbf{1}$ and $\Delta(h^*)=5.71>0$
(verified numerically in Experiment~4 of~\cite{das2026spectral} and
independently in Section~\ref{sec:exp-route3} below).
\end{lemma}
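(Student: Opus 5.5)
The plan is to make explicit the variational structure behind the field equation~\eqref{eq:field} and differentiate it twice. I take the MaxCal functional in spectral coordinates to have the form
\begin{equation}
  \mathcal{J}[h] = \sum_l \Bigl(-h(\lambda_l)\log\frac{h(\lambda_l)}{h_0(\lambda_l)}\Bigr) - \mathcal{V}[h],
  \qquad \frac{\partial\mathcal{V}}{\partial h(\lambda_l)} = \mathcal{T}_l[h].
\end{equation}
This is the form used in Paper~1~\cite{das2026spectral}: an entropy term relative to the reference $h_0$, plus a source potential whose gradient is the Landauer/MI/KL source. The first step is a check that this reproduces~\eqref{eq:field}. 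Indeed $\partial_{h_l}\bigl(-h_l\log(h_l/h_{0,l})\bigr) = -\log(h_l/h_{0,l}) - 1 = \mathcal{R}_l[h]$. Hence $\partial\mathcal{J}/\partial h_l = \mathcal{R}_l - \mathcal{T}_l$, and critical points are exactly the solutions of~\eqref{eq:fixed-point}.

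The second step is to differentiate once more at $h^*$. This gives $\partial\mathcal{R}_l/\partial h_m = -\delta_{lm}/h^*(\lambda_l)$ and hence $H_{lm} = -\delta_{lm}/h^*(\lambda_l) - \partial\mathcal{T}_l/\partial h_m|_{h^*}$. Expanding $\mathcal{J}[h^*+\epsilon\xi]$ to second order yields $\delta^2\mathcal{J} = \sum_{l,m}\xi_l H_{lm}\xi_m$, which is~\eqref{eq:hessian}.

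The main obstacle is the integrability assumption, namely that $\mathcal{T}$ really is the gradient of a potential $\mathcal{V}$. This is what makes $H$ symmetric, so that "$H\prec 0$" and the quadratic form coincide. I would take it from Paper~1's construction of $\mathcal{T}$ as the variation of a functional. If it fails, the proof falls back on the symmetric part of $H$, which is all the quadratic form sees.

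The stability statement is then standard second-order optimality. A critical point of a $C^2$ functional that is to be maximised is a strict local maximum when $H\prec 0$. Conversely, local stability of the maximum forces $H\preceq 0$, and "iff" holds in the nondegenerate sense used in Paper~1.

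In the mode-separable case the off-diagonal derivatives vanish, so $H$ is diagonal and $H\prec 0$ iff every $H_{ll}<0$. This is the condition $\partial\mathcal{T}_l/\partial h_l|_{h^*} > -1/h^*(\lambda_l)$. The smallest eigenvalue magnitude is then $\min_l(-H_{ll}) = \Delta(h^*)$.

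For the numerical instance ($\sigma^2=1$, $\mu_2=2$, $w_l=1$, $h_0\equiv 1$ on $P_8$), the source is $\mathcal{T}_l = 1/(1+h_l)$. The fixed point is therefore uniform and solves $h = \exp(-1-1/(1+h))$. This scalar equation has the root $h^*\approx 0.1547$, since $1/1.1547\approx 0.866$ and $e^{-1.866}\approx 0.1547$.

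The gap is then
\begin{equation}
  \Delta = \frac{1}{h^*} - \frac{1}{(1+h^*)^2} \approx 6.464 - 0.750 \approx 5.71 > 0.
\end{equation}
This confirms the stated value and establishes stability.
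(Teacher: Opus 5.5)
Your derivation is correct, but it takes a different route from the paper. The paper gives no derivation of this lemma. It reproduces the Hessian formula and stability criterion from Corollary~3 of Paper~1 and checks only the numbers, by a numerical fixed-point solve on $P_8$ (Experiment~4, Section~\ref{sec:exp-route3}: field residual $3\times10^{-14}$, $H_{mm}=-5.712$ uniformly). You instead reconstruct $\mathcal{J}$ as relative entropy minus a source potential, recover~\eqref{eq:field} as its stationarity condition, differentiate once more, and solve the uniform scalar equation by hand.

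Your route is self-contained, and it exposes two caveats the statement glosses over: ``$H\prec0$'' presupposes a symmetric $H$, and the ``iff'' is exact only in the nondegenerate (linearised) sense. Your ``main obstacle'' is milder than you suggest. Once $\mathcal{J}$ exists with $\nabla\mathcal{J}=\mathcal{R}-\mathcal{T}$, integrability is automatic, because $\mathcal{R}$ is itself a gradient. In the mode-separable case it is trivial anyway: for the Gaussian MI source, $\mathcal{V}=\sum_l\tfrac{\mu_2 w_l}{2}\log(\sigma^2+h(\lambda_l))$. Conversely, your fallback to the symmetric part would not rescue the ``iff'' for a genuinely non-integrable source. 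There, $(H+H^\top)/2\prec0$ is sufficient but not necessary for the linearisation $\dot\xi=H\xi$ to be stable.

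Likewise, $h_0\equiv1$ is not an extra assumption. The stated uniform $h^*$ forces $h_0=h^*e^{1+\mathcal{T}(h^*)}\approx1$, and the gap $1/h^*-1/(1+h^*)^2$ depends on $h^*$ alone. The paper's route guarantees agreement with Paper~1's actual definition of $\mathcal{J}$, which you reconstruct rather than quote, and it delivers machine-precision values, at the cost of not being self-contained.
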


\begin{proposition}[Stability--Conservation Tradeoff]
\label{prop:stab-cons}
In the mode-separable case, define the conservation identity residual
\begin{equation}
  D_m \;\equiv\; \sum_l \frac{\partial(\mathcal{R}_l - \mathcal{T}_l)}
  {\partial h(\lambda_m)}\biggr|_{h^*}
  \quad \forall\, m.
  \label{eq:D_m}
\end{equation}
Then $D_m = H_{mm}$, where $H_{mm}$ is the diagonal Hessian entry from
Corollary~3 of~\cite{das2026spectral}.  In particular:
\begin{enumerate}
  \item[(i)] $\nabla_\mathcal{K}\mathcal{T}_k = 0$ (i.e.\ $D_m = 0$
    for all $m$) holds if and only if $H_{mm} = 0$, i.e.\
    the fixed point is \emph{marginally stable} ($\Delta' = 0$).
  \item[(ii)] For any strictly stable fixed point ($\Delta' > 0$),
    the conservation deficit equals the stability margin:
    $D_m = -\Delta'$ for all $m$.
  \item[(iii)] For the Gaussian MI source~\eqref{eq:gmi} with
    $\sigma^2 = 1$, $\mu_2 = 2$, $w_l = 1$ on $P_8$\emph{:}
    $D_m \approx -5.71$ (verified numerically, Section~\ref{sec:exp-route3}),
    matching $-\Delta' = -5.71$ from Experiment~4 of~\cite{das2026spectral}.
  \item[(iv)] The Gaussian MI source~\eqref{eq:gmi} cannot satisfy $D_m = 0$
    for any real $h^*$: the constraint equation
    $\mu_2 w_m h_m^* = 2(\sigma^2 + h_m^*)^2$ has negative discriminant
    for standard parameters ($\mu_2 w < 8\sigma^2$).
  \item[(v)] The failure is structural: the vacuum solution
    ($\mu_2 = 0$, $h^* = h_0 e^{-1}$) also has $D_m = -e \neq 0$,
    confirming that the geometric functional $\mathcal{R}_l$ itself
    introduces the deficit, independent of the source.
\end{enumerate}
\end{proposition}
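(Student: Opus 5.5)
The plan is a direct computation: differentiate the residual mode by mode, use mode-separability to collapse the sum, and compare the result term by term with Lemma~\ref{lem:hessian}.

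\emph{Step 1: the core identity $D_m = H_{mm}$.} By \eqref{eq:R}, $\mathcal{R}_l$ depends only on $h(\lambda_l)$, so
\[
\partial\mathcal{R}_l/\partial h(\lambda_m) = -\delta_{lm}/h(\lambda_l).
\]
In the mode-separable case, $\partial\mathcal{T}_l/\partial h(\lambda_m)=0$ for $l\neq m$. Hence the sum over $l$ in \eqref{eq:D_m} collapses to the single term $l=m$, giving
\[
D_m = -\frac{1}{h^*(\lambda_m)} - \frac{\partial\mathcal{T}_m}{\partial h(\lambda_m)}\Big|_{h^*}.
\]
This is exactly $H_{mm}$ from \eqref{eq:hessian}, so the main claim follows.

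\emph{Step 2: items (i) and (ii).} For (i), I read the definition of the margin mode by mode: $\nabla_\mathcal{K}\mathcal{T}_k=0$ means $D_m=0$ for all $m$, which by Step~1 means $H_{mm}=0$ for all $m$, i.e.\ marginal stability. For (ii), the statement $D_m=-\Delta'$ for \emph{all} $m$ requires the diagonal entries $-H_{mm}$ to be equal across modes. I would make this explicit by restricting to homogeneous fixed points ($w_l$ constant and $h^*\propto\mathbf{1}$, as on $P_8$). In the general case I would state it as $D_m=H_{mm}\le-\Delta'$, with equality at the minimising mode. This mode-uniformity issue is the main obstacle: as literally stated, (ii) holds only under that homogeneity, and I would state the assumption openly rather than hide it.

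\emph{Step 3: item (iii).} Specialise to the Gaussian MI source~\eqref{eq:gmi}. Substituting $\partial\mathcal{T}_m/\partial h = -\mu_2 w_m/(2(\sigma^2+h)^2)$ gives
\[
D_m = -\frac{1}{h^*} + \frac{\mu_2 w_m}{2(\sigma^2+h^*)^2}.
\]
With $\sigma^2=1$, $\mu_2=2$, $w=1$, $h^*\approx0.1547$, this evaluates to $-6.464+0.748\approx-5.716$. That matches $-\Delta'$ from Lemma~\ref{lem:hessian}, and the agreement is just a numerical check of Step~1.

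\emph{Step 4: items (iv) and (v).} For (iv), setting $D_m=0$ and clearing denominators gives $\mu_2 w_m h = 2(\sigma^2+h)^2$, which is a quadratic in $h$:
\[
2h^2 + (4\sigma^2-\mu_2 w_m)h + 2\sigma^4 = 0.
\]
Its discriminant is $(4\sigma^2-\mu_2 w)^2-16\sigma^4 = \mu_2 w(\mu_2 w-8\sigma^2)$, which is negative exactly when $\mu_2 w<8\sigma^2$. So there is no real root, and in particular no positive root. For (v), I set $\mu_2=0$. Then $\mathcal{T}\equiv0$, and \eqref{eq:fixed-point} gives $h^*=h_0e^{-1}$, so $D_m=-1/h^*=-e/h_0(\lambda_m)$. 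This equals $-e$ under the normalisation $h_0\equiv1$, which I would state explicitly. In either case $D_m$ is nonzero, which shows the deficit comes from $\mathcal{R}$ itself.
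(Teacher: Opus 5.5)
Your proposal matches the paper's proof: mode-separability collapses the column sum in \eqref{eq:D_m} to $-1/h^*_m-\partial\mathcal{T}_m/\partial h(\lambda_m)|_{h^*}=H_{mm}$, and (iii)--(v) follow by substitution and the discriminant computation, which you carry out more explicitly than the paper does. Your two caveats are genuine, and they are exactly what the paper's ``Items (i)--(ii) follow directly'' skips over: since $\Delta'=\min_l(-H_{ll})$ (Lemma~\ref{lem:hessian}), the uniform identity $D_m=-\Delta'$ in (ii) needs the diagonal entries to coincide across modes, as they do in the homogeneous $P_8$ setting, and the same holds for the converse $\Delta'=0\Rightarrow D_m=0$ for all $m$ in (i); likewise the vacuum value in (v) is $-e/h_0(\lambda_m)$, which equals $-e$ only under $h_0\equiv 1$.
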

\begin{proof}
In the mode-separable case $\partial\mathcal{R}_l/\partial h(\lambda_m)
= -\delta_{lm}/h(\lambda_l)$ and $\partial\mathcal{T}_l/\partial h(\lambda_m)
= \delta_{lm}\,\partial\mathcal{T}_l/\partial h(\lambda_l)$.  Hence the
column sum in~\eqref{eq:D_m} reduces to a single term:
$D_m = -1/h_m^* - \partial\mathcal{T}_m/\partial h(\lambda_m)|_{h^*}
= H_{mm}$,
the diagonal Hessian entry of Lemma~\ref{lem:hessian}.
Items~(i)--(ii) follow directly.  Items~(iii)--(iv) follow by substituting
$h^* \approx 0.1547$ (Lemma~\ref{lem:hessian}, verified below in
Experiment~\ref{sec:exp-route3}) and evaluating the discriminant.  Item~(v) follows from
$\partial\mathcal{R}_m/\partial h(\lambda_m) = -1/h_m^* \neq 0$ even
when $\mathcal{T}_l \equiv 0$.
\end{proof}

\begin{remark}[LQR--LQE duality at the fixed point]
\label{rem:separation}
The control-theoretic reading of Remark~\ref{rem:control-theory} has a
dual: if $\mathcal{G}_{\mathrm{opt}}$ is the LQR-optimal controller, the
separation theorem~\cite{wonham1968} supplies a paired Kalman estimator with
gain $K_f=\Sigma C^\top V^{-1}$.  Under an LQG symmetry condition
($Q=W/\alpha$, $R=V\alpha$), both satisfy the same algebraic Riccati
equation, giving $P\Sigma=I$ (i.e.\ $p_m\sigma_m=1$ per mode) at $h^*$:
controller and estimator are geometrically inverse.
The full CARE derivation and the gating numerical check ($\sigma_m\stackrel{?}{=}\tfrac12$
on $P_8$) are stated in Open Problem~Q19 of the appendix.
\end{remark}

\subsection{Definition}
\label{sec:matter-def}

\begin{definition}[Spectral distinction field]
\label{def:matter-field}
  A \emph{spectral distinction field} on a graph $G$ is a time-dependent vertex
  signal $\psi_t \in \mathbb{R}^{|V|}$ whose spectral coefficients
  $c_l(t) = \phi_l^\top \psi_t$ are square-integrable over $[0,T]$.
  The associated coefficient trajectory $\mathbf{c}(t) = (c_l(t)) \in
  \mathbb{R}^N$ is the \emph{spectral representation} of the distinction field
  --- in digital-twin terms, the spectral state of the evolving scene as
  represented by the twin.
\end{definition}

\begin{definition}[Distinction dynamics equation]
\label{def:matter-eq}
  Given a spectral distinction field $\psi_t$ and a physical dynamics operator
  $\mathcal{G}$, the \emph{distinction dynamics equation} is the evolution law:
  \begin{equation}
    \frac{d\mathbf{c}}{dt} = \mathcal{G}[\mathbf{c},\, h_t],
    \label{eq:matter}
  \end{equation}
  where $h_t$ is the current spectral kernel.  We refer to~\eqref{eq:matter} as
  the distinction dynamics equation; informally, it is the \emph{scene dynamics
  equation}.  The distinction dynamics equation is
  \emph{coupled} to the field equation~\eqref{eq:field} when $\mathcal{T}_l$
  depends on $\mathbf{c}$ (e.g., via $w_l = |c_l|^2$).
\end{definition}

\begin{definition}[Representation leakage]
\label{def:rep-leakage}
\emph{Representation leakage} (synonymously, \emph{representational leakage})
is any apparent gain, loss, or redistribution of scene structure in the twin
that is not attributable to \emph{(i)}~physical scene dynamics under the
distinction dynamics equation~\eqref{eq:matter}, \emph{(ii)}~explicit
incoming observations, or \emph{(iii)}~controlled compression or erasure
under the stated update law and spectral/topology budgets.

Concretely, leakage manifests as a nonzero conservation residual: the
per-mode deficit $D_m = -\Delta'$ (Proposition~\ref{prop:stab-cons})
quantifies the rate at which each spectral mode can accumulate unpaid-for
structural change per fixed-point step.  In Algorithm~\ref{alg:planetary},
the diagnostic $D_t$ aggregates these per-mode residuals into a scalar
pre-transmission check.  Establishing quantitative thresholds that
distinguish leakage from ordinary model mismatch requires controlled
injection experiments (sensor drift, registration error, calibration shift)
that are posed as Open Problem~Q14 but not resolved here.
\end{definition}

\begin{remark}[Candidate physical realizations of $\mathcal{G}_{\mathrm{opt}}$]
\label{rem:gopt-bianchi}
Candidate domain-specific realizations of the conservation-closing
controller $\mathcal{G}_{\mathrm{opt}}$ include Optimal Channel Network
routing for drainage systems and Cowan--Farquhar stomatal control;
evaluating these mappings is outside the present scope.
\end{remark}

\begin{remark}[Static and dynamic regimes]
  For a geologically static planetary surface, $\mathcal{G} \approx 0$ and
  the distinction dynamics equation is trivial: $\mathbf{c}(t) \equiv \Phi^\top V$ for
  fixed vertex positions $V$.  Dynamics arise from (i)~sensor motion
  (the rover reveals new portions of $V$), (ii)~environment change
  (rockfall, dust deposition), and (iii)~compression artifacts (the
  reconstructed $\hat{V}_t = \Phi_k \Phi_k^\top V$ changes as $k$ varies).
  Representational leakage can also masquerade as dynamics: sensor motion
  aliasing, registration mismatch between frames, compression-induced topology
  loss below the conservation budget, or changing graph construction (e.g.\
  $k$-NN reties) can all induce apparent scene change without true physical
  evolution.
\end{remark}

\paragraph{The canonical distinction dynamics equation for 3D geometry.}
When the graph topology is fixed across a temporal sequence of 3D scans
(e.g., a stable terrain mesh observed from different rover positions), the
eigenbasis $\Phi$ is shared across frames.  The vertex positions $V_t$
change as the rover accumulates observations, and the distinction dynamics equation reduces to:
\begin{equation}
  \frac{d}{dt}\bigl(\Phi^\top V_t\bigr) = \Phi^\top \frac{dV_t}{dt}.
  \label{eq:coeff-dynamics}
\end{equation}
This is a linear, decoupled ODE for each spectral coefficient $c_l(t)$:
$\dot{c}_l = \phi_l^\top \dot{V}_t$.  The kernel field equation~\eqref{eq:field}
determines $h_t^*$ given $\mathbf{c}(t)$; the distinction dynamics equation~\eqref{eq:coeff-dynamics}
determines how $\mathbf{c}(t)$ evolves given the physical scene.

\section{Structural Hodge Correspondence (Background)}
\label{sec:hodge-decomp}

This section is intentionally background-level and interpretive.
We use the Hodge decomposition
$f = B_1^\top \psi + B_2\omega + f_{\mathrm{harm}}$
as a structural guide for reading source behavior in the kernel equation.

\begin{proposition}[Hodge source correspondence (structural)]
\label{prop:hodge-decomp}
For source weights $w_l=c_l^2$ with $c_l=\phi_l^\top\psi$, the
mode-separable reading is:
\begin{enumerate}
  \item[(i)] gradient content $B_1^\top\psi$ controls scalar mode energy;
  \item[(ii)] curl content $B_2\omega$ introduces cross-mode coupling
  when present;
  \item[(iii)] harmonic content $f_{\mathrm{harm}}\in\ker(L_1)$ carries
  topology-persistent structure tied to $\beta_1$.
\end{enumerate}
\emph{Status:} this is a structural correspondence used for interpretation and
experiment design, not a standalone dynamical theorem.
\end{proposition}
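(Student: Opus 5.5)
To justify Proposition~\ref{prop:hodge-decomp}, I will make each item precise as a statement about edge signals and vertex spectral coefficients, and then prove only those precise statements. The anchor is the Hodge decomposition~\eqref{eq:hodge-decomp} together with $B_1B_2=0$. The three summands are mutually orthogonal. This holds because $\langle B_1^\top\psi, B_2\omega\rangle=\psi^\top B_1B_2\omega=0$, and because $\ker L_1=\ker B_1\cap\ker B_2^\top$, which is orthogonal to both $\mathrm{im}\,B_1^\top$ and $\mathrm{im}\,B_2$. The projections are therefore well defined.

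\textbf{Item (i).} I will use the eigenbasis $L_0\phi_l=\lambda_l\phi_l$ and expand $\psi=\sum_l c_l\phi_l$. Then
\[
\|B_1^\top\psi\|^2=\psi^\top L_0\psi=\sum_l\lambda_l c_l^2=\sum_l\lambda_l w_l .
\]
So the gradient component's energy is exactly a nonnegative combination of the source weights $w_l=c_l^2$, with no cross terms. This is precisely the mode-separable situation in which Lemma~\ref{lem:hessian} and Proposition~\ref{prop:stab-cons} apply. I will also note that the edge vectors $B_1^\top\phi_l/\sqrt{\lambda_l}$ with $\lambda_l>0$ form an orthonormal basis of $\mathrm{im}\,B_1^\top$. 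As a result, gradient content corresponds one-to-one with scalar mode content on the nonzero spectrum.

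\textbf{Item (ii).} The curl part $B_2\omega$ lies in the kernel of $B_1$, so it has no vertex-potential representation: $B_1B_2\omega=0$. Any scalar reading of it must therefore come from a coupling term that is not diagonal in $\{\phi_l\}$. I will make this concrete by showing that any source depending on the curl energy $\|B_2\omega\|^2=\omega^\top L_2\omega$ is diagonal in the $L_2$ face eigenbasis, not in the $L_0$ basis. Once such a source is written in terms of $h(\lambda_l)$, its Jacobian $\partial\mathcal T_l/\partial h(\lambda_m)$ generically has off-diagonal entries. I will give a two-mode example demonstrating that these entries are nonzero. This suffices for the qualitative claim ``introduces cross-mode coupling when present''. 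It is also the main obstacle: without a fixed functional form for how curl enters $\mathcal T$, the claim cannot be a theorem. I will state it as an existence/genericity statement rather than a universal one, consistent with the proposition's ``structural'' status.

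\textbf{Item (iii).} I will invoke the definition $\beta_1=\dim\ker L_1$, so the harmonic component lives in a $\beta_1$-dimensional subspace. Because $f_{\mathrm{harm}}$ is orthogonal to both the gradient and curl parts, it is invisible to every $c_l$: the map $\psi\mapsto B_1^\top\psi$ never reaches it. It is therefore unaffected by any kernel reweighting $h$ on $L_0$. This makes it ``topology-persistent'' in the sense that it changes only when $\beta_1$ changes, that is, only when the complex itself changes.

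I will close by stating explicitly that items (i) and (iii) are exact identities, while (ii) is a conditional/generic statement. This matches the Tier-2 classification in Table~\ref{tab:tiers}.
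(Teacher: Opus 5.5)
The paper gives no argument for this proposition; it is asserted as an interpretive reading. Your exact versions of (i) and (iii) are correct and already say more than the paper does: the identity $\|B_1^\top\psi\|^2=\sum_l\lambda_l w_l$, the orthonormal basis $\{B_1^\top\phi_l/\sqrt{\lambda_l}\}_{\lambda_l>0}$ of $\mathrm{im}\,B_1^\top$, and $\dim\ker L_1=\beta_1$ with $\ker L_1\perp\mathrm{im}\,B_1^\top$.

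The genuine gap is in (ii). Your inference ``$B_2\omega$ is not a gradient, so any scalar reading of it must come from a coupling not diagonal in $\{\phi_l\}$'' does not follow. Since $\langle B_2\omega,B_1^\top\phi_l\rangle=\phi_l^\top B_1B_2\omega=0$ for every $l$, the curl part has \emph{no} component in the vertex-mode representation, diagonal or otherwise. This is exactly the orthogonality you use in (iii) to show harmonic content is invisible to every $c_l$, and it applies verbatim to curl content. Concretely, the potential satisfies $L_0\psi=B_1f$, so $\psi$, and hence every $w_l=c_l^2$, depends only on the divergence of $f$. Under the proposition's own hypothesis the source is blind to $\omega$, so curl cannot introduce any coupling.

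Your proposed mechanism does not rescue this. First, $\omega^\top L_2\omega$ is a form on $\mathbb{R}^{|F|}$, so ``diagonal in the $L_2$ basis, not the $L_0$ basis'' compares bases of different spaces. More decisively, the curl energy is a datum of the edge flow with no $h$-dependence, so it contributes nothing to $\partial\mathcal{T}_l/\partial h(\lambda_m)$ for any $l,m$. A two-mode example with a nonzero off-diagonal Jacobian would have to postulate a term depending on $\omega$ \emph{and} on several $h(\lambda_m)$ through an operator not diagonal in $\Phi$. The signed face-to-vertex map $B_1B_2$ vanishes identically, so the complex supplies no such operator. Such an example would only show that non-separable sources exist, not that curl creates them.

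The defensible form of (ii) is explicitly conditional. If $\mathcal{T}_l$ is augmented by a specified curl-dependent term $\Xi_l(h;\omega)$ with $\partial\Xi_l/\partial h(\lambda_m)\neq 0$ for some $m\neq l$, then coupling is present by assumption. With $w_l=c_l^2$ alone, (ii) is vacuous.

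The same conflation of quadratic-form structure with $h$-Jacobian structure appears in (i). Mode separability in Lemma~\ref{lem:hessian} means $\partial\mathcal{T}_l/\partial h(\lambda_m)=0$ for $m\neq l$, which is a property of how the source depends on $h$. The energy identity neither implies it nor is needed for it: the Gaussian MI source is separable simply because $\mathcal{T}_l$ involves only $h(\lambda_l)$ and $w_l$.

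In (iii), what persists is the subspace $\ker L_1$, which is fixed by the complex. The harmonic component of a given flow is not fixed; it moves whenever the flow's harmonic coordinates do. Phrase ``topology-persistent'' accordingly.

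Finally, your identities $\|B_1^\top\psi\|^2=\psi^\top L_0\psi$, $\ker L_1=\ker B_1\cap\ker B_2^\top$ and $\|B_2\omega\|^2=\omega^\top L_2\omega$ require $L_0=B_1B_1^\top$, $L_1=B_1^\top B_1+B_2B_2^\top$ and $L_2=B_2^\top B_2$. These are the forms consistent with $B_1\in\mathbb{R}^{|V|\times|E|}$ and $B_2\in\mathbb{R}^{|E|\times|F|}$. The paper's displayed definitions have the transposes reversed, so state your convention explicitly.
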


\section{Topological Conservation Theorem}
\label{sec:topo-conservation}

The Betti numbers $(\beta_0,\beta_1,\beta_2)$ count connected components,
independent cycles, and enclosed voids; in the MaxCal framework they act as
\emph{conserved charges} of the distinction field.

\begin{lemma}[Topology-carrying retained subspace]
\label{lem:topo-subspace}
Under assumptions (A1)--(A2), the first $\beta_0+\beta_1$ retained modes
contain all zero-mode directions required to preserve connected components
and all topology-carrying directions required to preserve the
$\beta_1$ cycle structure in the compressed representation.
\end{lemma}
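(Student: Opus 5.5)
The plan is to make (A1)--(A2) concrete and reduce the lemma to a statement about the bottom of the spectrum of the relevant Hodge Laplacians. I will read (A1) as \emph{fixed topology}: the complex $(V,E,F)$, and hence $B_1,B_2$ and the eigenbases of $L_0$ and $L_1$, are shared across frames, as in the canonical distinction dynamics equation~\eqref{eq:coeff-dynamics}. I will read (A2) as the \emph{spectral-ordering assumption}: the retained modes are ordered by ascending eigenvalue, and every topology-carrying direction lies among the lowest modes. Concretely, this means the $\beta_0$ kernel directions of $L_0$ together with the $\beta_1$ harmonic directions of $L_1$ (or their representatives in the retained basis) occupy the first $\beta_0+\beta_1$ slots of the ordering.

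The first step handles $\beta_0$. The kernel of $L_0=B_1^\top B_1$ is spanned by the indicator vectors of the connected components. Since $L_0\succeq 0$, these span exactly the eigenspace for eigenvalue $0$, which is the bottom of the ascending order. The first $\beta_0$ modes therefore span $\ker L_0$, and projecting $\Phi_k\Phi_k^\top$ with $k\ge\beta_0$ preserves every component's zero-mode coefficient. No component's mean signal is annihilated, so the component count of the reconstructed representation is unchanged.

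The second step handles $\beta_1$. By the Hodge decomposition~\eqref{eq:hodge-decomp}, $\ker L_1$ is orthogonal to both $\operatorname{im} B_1^\top$ and $\operatorname{im} B_2$, and it has dimension $\beta_1$. Its elements are exactly the eigenvectors of $L_1$ at eigenvalue $0$. Under (A2), these $\beta_1$ directions come next in the retained ordering, immediately after the $\beta_0$ vertex zero modes. Counting the two groups gives the threshold $\beta_0+\beta_1$. Because the harmonic subspace is orthogonal to the gradient and curl parts, truncating away higher modes removes only gradient and curl content. It cannot change the homology class of any harmonic cycle representative, so the $\beta_1$ cycle structure survives.

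The main obstacle is the bridge between the vertex eigenbasis $\Phi$ of $L_0$, which is the basis actually compressed, and the edge-space harmonic forms of $L_1$. Table~\ref{tab:tiers} flags the general $L_0$--$L_1$ bridge as open, so I will not attempt it in general. Instead I will place the burden explicitly on (A2): the assumption asserts that the cycle-carrying directions induced in the retained basis are the next $\beta_1$ lowest modes. The lemma then becomes a counting and orthogonality statement rather than a spectral-gap theorem. I will close by noting that the figure-eight counterexample (Remark~\ref{rem:a2-counterexample}) is exactly the case where short cycles push these directions above index $\beta_0+\beta_1$.
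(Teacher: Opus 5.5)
Your skeleton is the paper's own. The only argument the paper gives, in the proof of Theorem~\ref{thm:topo-conservation}(i), is that the first $\beta_0$ eigenvectors of $L_0$ span $\ker L_0$, and that ``the next $\beta_1$ eigenvectors correspond to independent cycles.'' That second claim is just (A2) read as a spectral-ordering assumption. Your Step~1 and your closing reduction to (A2) reproduce this, and at that level the lemma is essentially definitional. One small correction: (A1) is the complex condition $B_1B_2=0$, which is what licenses the Hodge decomposition you use. It is not fixed topology; fixed topology is a separate hypothesis of the theorem.

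The step that fails is the Hodge-orthogonality justification in Step~2. The compression acts on vertex signals through $\Phi_k\Phi_k^\top$. Pushed to edges, every $L_0$ mode is a pure gradient $B_1^\top\phi_l$. By the orthogonality in \eqref{eq:hodge-decomp}, $\langle B_1^\top\phi_l, f_{\mathrm{harm}}\rangle = 0$ for every $l$ and every $f_{\mathrm{harm}}\in\ker L_1$. So no $L_0$ mode, retained or discarded, carries harmonic content. Two consequences follow:
\begin{enumerate}
\item The $\beta_1$ harmonic directions cannot occupy slots $\beta_0+1,\dots,\beta_0+\beta_1$.
\item Your sentence that truncation ``cannot change the homology class of any harmonic cycle representative'' is vacuous rather than a proof of survival.
\end{enumerate}
The argument also proves too much. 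It would apply equally for $k<\beta_0+\beta_1$, contradicting Theorem~\ref{thm:topo-conservation}(ii) and the figure-eight calibration in Remark~\ref{rem:a2-counterexample}.

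The topology-carrying directions in the lemma must therefore be vertex-space surrogates, such as the projected cycle-indicator signals $\Phi_k^\top U$ of Remark~\ref{rem:a2-counterexample}. Their location among the first $\beta_0+\beta_1$ modes comes from (A2) alone. Delete the orthogonality sentences and let (A2) carry the cycle part, as your last paragraph already concedes.
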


\paragraph{Assumption A2 caveat.}
Assumption~A2 requires the retained low-frequency $L_0$ modes to span a
$\beta_1$-dimensional cycle-carrying subspace.
This bridge can fail for short, spectrally interleaved cycles (Q10; see
Remark~\ref{rem:a2-counterexample} and Experiment~5).

\begin{remark}[Worked A2 calibration on short-cycle graphs]
\label{rem:a2-counterexample}
We include a compact simulation in \texttt{kernelcal}
(\texttt{kernelcal.terrain.run\_worked\_a2\_counterexample}) to calibrate
when A2 fails.
Both graphs below have $\beta_0=1$, $\beta_1=2$, and thus the same theorem floor
$k_{\min}=\beta_0+\beta_1=3$.
Using the first $k_{\min}$ modes of $L_0$, we project two independent
cycle-indicator signals and measure the rank of the projected cycle subspace.
\begin{center}
\small
\begin{tabular}{lcccc}
\toprule
Case & $\beta_1$ & $k_{\min}$ & projected rank & A2 proxy \\
\midrule
Long-cycle control (two separated 4-cycles + bridge) & 2 & 3 & 2 & holds \\
Short-cycle figure-eight (two triangles sharing a node) & 2 & 3 & 1 & fails \\
\bottomrule
\end{tabular}
\end{center}
So $k\geq \beta_0+\beta_1$ alone is not sufficient when short,
spectrally interleaved cycles collapse in the retained $L_0$ subspace.
This is a worked counterexample for scope calibration, not a replacement for
the full $L_0$--$L_1$ bridge theorem.

\paragraph{Parametric A2-failure criterion (conjecture, numerically observed).}
Let $\ell_{\min}$ be the shortest homologically nontrivial cycle length,
$\delta_k = \lambda_{k+1}(L_0) - \lambda_k(L_0)$ the topological eigengap
at $k=\beta_0+\beta_1$, $\gamma(G)=\ell_{\max}/\ell_{\min}$ the cycle-length
ratio, and $\rho(k)=\sigma_{\min}(\Phi_k^\top U)$ the cycle-subspace
fidelity for normalized cycle-indicator columns $U$.
Motivated by the Davis--Kahan $\sin\Theta$
theorem~\cite{daviskahan1970} and the cycle-Laplacian eigenvalue scaling
$\lambda\sim 4\pi^2/\ell^2$, we conjecture
\[
  \rho(k)\;\geq\; 1 \;-\; C_1/\bigl(\ell_{\min}^{\,2}\,\delta_k\bigr)
  \;-\; C_2\,(\gamma(G)-1),
\]
with $(C_1,C_2)$ graph-family constants.
Accordingly, an augmentation $\Delta k\geq\lceil C_1/(\ell_{\min}^{\,2}\,\delta_k)\rceil$
should restore cycle-subspace fidelity when A2 fails.
A parametric sweep over $(\ell_{\min},\ell_{\max},\delta_k)$ together with the
augmentation recovery check is provided in the \texttt{kernelcal}
package~\cite{kernelcal2026}; a full theoretical treatment with explicit
constants is left open.
\end{remark}

\begin{theorem}[Topological Conservation Theorem]
\label{thm:topo-conservation}
Let $K_h=\Phi_k \,\mathrm{diag}(h_k)\,\Phi_k^\top$ be a spectral kernel
retaining the $k$ lowest-eigenvalue modes of the vertex Laplacian $L_0$
for a fixed-topology 3D surface graph with Betti numbers
$(\beta_0,\beta_1,\beta_2)$. Assume:

\begin{enumerate}
\item[(A1)] the graph is a valid surface complex with boundary operators
$B_1,B_2$ satisfying $B_1B_2=0$;

\item[(A2)] the retained low-frequency modes of $L_0$ contain a
$\beta_1$-dimensional topology-carrying subspace aligned with the
independent cycle structure of the Hodge complex;

\item[(A3)] the claim concerns preservation of the \emph{compressed
representation} of topology, not physical change in the underlying surface.
\end{enumerate}

Then:

\begin{enumerate}
\item[(i)] if $k\geq \beta_0+\beta_1$, the compressed representation
preserves all connected components and all independent cycles;

\item[(ii)] if $k<\beta_0+\beta_1$, at least one topological charge is lost
from the compressed representation, and recovering it requires additional
information from the environment;

\item[(iii)] under the same assumptions, the lower bound
$k_{\min}=\beta_0+\beta_1$ is tight for at least one class of graphs.
\end{enumerate}
\end{theorem}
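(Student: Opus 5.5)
The proof is essentially a dimension-counting argument, organised around Lemma~\ref{lem:topo-subspace}. The plan is to split the retained subspace $\mathrm{span}(\Phi_k)$ into two pieces: the $\beta_0$-dimensional kernel of $L_0$ and the $\beta_1$-dimensional cycle-carrying subspace supplied by (A2). Part~(i) follows from containment, part~(ii) from a rank deficit, and part~(iii) from an explicit family.

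\textbf{Part (i).} First I fix the sense of ``preserve'', as (A3) permits: a topological charge is preserved if its carrying direction has nonzero projection onto $\mathrm{span}(\Phi_k)$, so that the compressed kernel $K_h$ resolves it.
\begin{enumerate}
\item[(a)] Since $L_0\succeq 0$, its $\beta_0=\dim\ker L_0$ zero eigenvalues come first in the ordering. The component indicator vectors span $\ker L_0$, so any $k\geq\beta_0$ retains every connected component exactly.
\item[(b)] By (A2), the next retained modes contain a $\beta_1$-dimensional subspace aligned with the independent cycles of the Hodge complex. Assumption (A1) ($B_1B_2=0$) makes that cycle structure, $\ker L_1\cong\ker B_1/\mathrm{im}\,B_2$, well defined.
\item[(c)] Lemma~\ref{lem:topo-subspace} then says that once $k\geq\beta_0+\beta_1$, both subspaces lie inside $\mathrm{span}(\Phi_k)$. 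Projecting the harmonic representatives therefore has rank $\beta_1$, and no cycle collapses.
\end{enumerate}

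\textbf{Part (ii).} If $k<\beta_0+\beta_1$, the retained space has dimension $k$. It cannot contain a $(\beta_0+\beta_1)$-dimensional subspace whose $\beta_0+\beta_1$ independent directions each carry a distinct charge. At least one charge direction must therefore be absent from the compressed representation: its component orthogonal to $\mathrm{span}(\Phi_k)$ is nonzero, and its projection either vanishes or becomes dependent on the others. That component is not a function of the transmitted coefficients $\Phi_k^\top V$. So recovering the charge requires information not in the compressed state, i.e.\ from the environment (new observations), consistent with Definition~\ref{def:rep-leakage}.

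\textbf{Main obstacle.} The weak point is justifying the claim in part (ii) that each of the $\beta_0+\beta_1$ charges occupies its own independent direction. Vertex modes and edge cycles live in different spaces, so the identification goes through the (A2) alignment map. I would take that map to be injective on the charge span and formalise it that way, since this is what makes the counting sharp.

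\textbf{Part (iii).} I would exhibit the long-cycle control family of Remark~\ref{rem:a2-counterexample}: two separated $4$-cycles joined by a bridge, with $\beta_0=1$ and $\beta_1=2$. There (A2) holds, and the projected cycle rank is $2$ at $k=3$. With $k=2$ the rank is at most $2-\beta_0=1$, so one cycle is lost. Hence $k_{\min}=3=\beta_0+\beta_1$ is attained, which shows tightness.
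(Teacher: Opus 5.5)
Your proposal follows the paper's proof in substance. Part (i) is containment of the $\beta_0$ kernel modes and the (A2) cycle-carrying modes once $k\geq\beta_0+\beta_1$; part (ii) uses the fact that a dropped obligate direction is orthogonal to the retained span and so cannot be reconstructed, which you turn into a rank count; and part (iii) exhibits an attaining example. There you use the two-$4$-cycle-plus-bridge control of Remark~\ref{rem:a2-counterexample} instead of the paper's $P_N$ and one-handle graph, and you drop the paper's optional Landauer bound.

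One thing to fix is your opening definition of preservation. It should require that the projected charge set has full rank $\beta_0+\beta_1$, which is what you actually use in (i)(c), in (ii) and in the $k=2$ count of (iii). Requiring only that each charge direction has nonzero projection onto $\mathrm{span}(\Phi_k)$ does not support the dimension-counting step in (ii).
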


\begin{proof}
  \textit{(i)} The first $\beta_0$ eigenvectors of $L_0$ (with eigenvalue
  $\lambda_0 = 0$) form a basis for $\ker(L_0)$; retaining them preserves
  all connected components.  The next $\beta_1$ eigenvectors correspond to
  independent cycles: they span the image of the Fiedler-mode cluster and
  encode $\beta_1$ topological handles of the surface Hodge complex.
  Retaining $k \geq \beta_0 + \beta_1$ modes includes all of these
  topologically obligate eigenvectors.

  \textit{(ii)} Dropping a topologically obligate mode removes at least one
  independent cycle or connected component from the compressed representation.
  The missing mode cannot be reconstructed from the retained subspace (the
  eigenvectors are orthogonal), so recovering the lost topological charge
  requires new information from the environment.
  By Landauer's principle~\cite{landauer1961}, erasing $n$ topologically
  distinct states dissipates at least $\lfloor \log_2 n \rfloor \cdot
  k_B T \ln 2$ of heat, with $n \geq \beta_0 + \beta_1 - k + 1 \geq 2$.

  \textit{(iii)} For a path graph $P_N$ ($\beta_0 = 1$, $\beta_1 = 0$),
  $k_{\min} = 1$: a single mode (the constant eigenvector) suffices for
  connectivity.  For a graph with one handle ($\beta_0 = 1$, $\beta_1 = 1$),
  $k_{\min} = 2$; removing the second eigenvector collapses the handle.
\end{proof}

\begin{corollary}[Topology-preserving compression budget]
\label{cor:budget}
For a planetary surface mesh with Betti numbers $(\beta_0, \beta_1, \beta_2)$,
the minimum spectral mode count for topology-preserving compression is:
\begin{equation}
  k_{\min} = \beta_0 + \beta_1.
  \label{eq:k-min}
\end{equation}
Any remaining budget $k - k_{\min}$ should be allocated to modes maximizing
information return:
\begin{equation}
  l^* = \operatorname*{arg\,max}_{l \geq k_{\min}}
  \frac{h^*(\lambda_l)\,|c_l|^2}{\mathcal{T}_l[h^*]},
  \label{eq:budget-allocation}
\end{equation}
the ratio of representational weight to source cost.
\end{corollary}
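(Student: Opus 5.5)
The corollary has two parts, and I treat them separately. The minimum count $k_{\min}=\beta_0+\beta_1$ comes from Theorem~\ref{thm:topo-conservation}. The allocation rule~\eqref{eq:budget-allocation} comes from a greedy marginal-value argument in the mode-separable setting of Section~\ref{sec:matter-eq}.

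For~\eqref{eq:k-min}, I read the planetary surface mesh as a fixed-topology 3D surface graph satisfying (A1)--(A3). Two inequalities are needed.
\begin{itemize}
\item Sufficiency: Theorem~\ref{thm:topo-conservation}(i), together with Lemma~\ref{lem:topo-subspace}, shows that $k=\beta_0+\beta_1$ retained modes already preserve every connected component and independent cycle.
\item Necessity: Theorem~\ref{thm:topo-conservation}(ii) shows that every $k<\beta_0+\beta_1$ loses at least one charge.
\end{itemize}
Hence the minimum of the set of topology-preserving $k$ is exactly $\beta_0+\beta_1$. Part (iii) of the theorem is not needed for the equality, only for its tightness within the class.

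$\beta_2$ does not appear in the count, and this needs a remark. The theorem's conserved charges are components and cycles only. I would state explicitly that "topology-preserving" in the corollary means in the sense of Theorem~\ref{thm:topo-conservation}(i), so that voids are not claimed.

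For the allocation rule, I fix the $k_{\min}$ obligate modes and consider adding one further mode $l\geq k_{\min}$. Mode separability (the reduction in Lemma~\ref{lem:hessian} and Proposition~\ref{prop:stab-cons}) makes both the representational weight and the source cost additive across modes. I define the information return of mode $l$ as $h^*(\lambda_l)|c_l|^2$, which is its contribution to $\mathbf{c}^\top K_{h^*}\mathbf{c}$ with $K_h=\Phi\,\mathrm{diag}(h)\Phi^\top$ and $c_l=\phi_l^\top\psi$. Its cost is $\mathcal{T}_l[h^*]>0$, which is positive for the Gaussian MI source~\eqref{eq:gmi} and assumed positive in general.

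With a budget on total source cost, the allocation is then a knapsack problem with additive values and costs. Its fractional relaxation is solved greedily by ratio. So the next mode to add is the maximizer in~\eqref{eq:budget-allocation}, and iterating fills the remaining $k-k_{\min}$ slots.

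The main obstacle is this second part. "Should be allocated" is a normative claim, and it is only a theorem once an objective is fixed. I would therefore state the greedy optimality for the linear relaxation. The integral problem is only approximately optimal, with a standard greedy guarantee: the better of the greedy solution and the best single item achieves at least half the optimum. I would flag this claim at Tier~2 rather than Tier~1, consistent with Table~\ref{tab:tiers}.
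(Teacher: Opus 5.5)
The $k_{\min}=\beta_0+\beta_1$ half follows the paper's route. The paper states the corollary without separate proof, as an immediate reading of Theorem~\ref{thm:topo-conservation}(i) for sufficiency and (ii) for necessity. Your insistence on (A1)--(A3) and on excluding $\beta_2$ is the right scoping, since Remark~\ref{rem:a2-counterexample} shows the count fails when A2 does. The paper gives no derivation of the allocation rule~\eqref{eq:budget-allocation}; it simply asserts it as a design prescription. Your attempt to turn it into an optimality statement is where the gap lies.

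\textbf{The budget is mis-specified.} Your fractional-knapsack argument shows that ratio-greedy maximizes $\sum_l h^*(\lambda_l)|c_l|^2$ under a \emph{source-cost} budget $\sum_l \mathcal{T}_l[h^*]\le B$. The corollary's remaining budget is a \emph{mode count} $k-k_{\min}$, which is a cardinality constraint. Under it, ratio order is not optimal for the same objective. Take one free slot and two candidates with (value, cost) $(10,10)$ and $(1,\tfrac12)$: the ratio rule picks the second, but the optimum is the first.

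Your closing step (``iterating fills the remaining $k-k_{\min}$ slots'') silently swaps the cost budget for the count budget. Imposing both gives a two-constraint knapsack, and the same example with $B\ge 10$ shows ratio-greedy fails there even fractionally.

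\textbf{Why this is not cosmetic.} In the paper's deployment setting, $w_l=c_l^2$ (Algorithm~\ref{alg:planetary}), so the Gaussian MI source gives $h^*(\lambda_l)|c_l|^2/\mathcal{T}_l[h^*]=2h^*(\lambda_l)(\sigma^2+h^*(\lambda_l))/\mu_2$. Implicit differentiation of~\eqref{eq:fixed-point} at a strictly stable fixed point ($D_m<0$) shows that $h^*(\lambda_l)$ decreases as $|c_l|$ grows. So at a fixed prior $h_0(\lambda_l)$ the ratio rule prefers modes with \emph{smaller} coefficients. The cost-versus-count choice therefore changes which modes are selected, not just the proof.

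\textbf{How to close the gap.} You have two options.
\begin{enumerate}
\item State the budget as $\sum_l\mathcal{T}_l[h^*]\le B$. Your greedy argument and the $\tfrac12$-approximation then hold, but $B$, not $k-k_{\min}$, fixes the number of added modes.
\item Keep the count budget and take the ratio itself as the additive per-mode score. Top-$(k-k_{\min})$ selection is then trivially optimal, but the rule becomes a definition of ``information return'' rather than a derived result.
\end{enumerate}
Either way, placing the rule below Tier~1 is correct.

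A minor point: the representational weight is $\psi^\top K_{h^*}\psi=\sum_l h^*(\lambda_l)c_l^2$, not $\mathbf{c}^\top K_{h^*}\mathbf{c}$.
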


\begin{remark}[Nystr\"{o}m approximation and topology error]
  For large meshes ($|V| \gg 10^4$), the Nystr\"{o}m extension approximates
  $\Phi$ from a coarse subsample $V_{\mathrm{coarse}} \ll |V|$.  If
  $V_{\mathrm{coarse}}$ is too small to capture all $\beta_1$ handles of
  the full mesh, the approximated $\hat{\beta}_1 < \beta_1$, and the
  computed $k_{\min}$ understates the true topology budget.  This is a
  quantifiable violation of the Topological Conservation Theorem and
  constitutes the ``topology preservation error'' of the Nystr\"{o}m
  extension; quantifying this error is an open problem.
\end{remark}

\section{Planetary Surface Digital Twins}
\label{sec:planetary}

We now apply the distinction dynamics equation and the topological
conservation budget to three planetary scenarios, using
scene-faithfulness (Definition~\ref{def:rep-leakage}) as the design
criterion throughout.

\subsection{Lunar Surface Reconstruction}
\label{sec:lunar}

The DREAMS Laboratory Lunar Digital Twin project~\cite{dreamslab2026} requires
sub-centimeter surface reconstruction over deep-space uplinks ($\sim$kbps).
Apollo~17/LRO DEMs over the Taurus--Littrow valley provide the spatial prior:
a $k$-NN graph and its first $k_0$ eigenpairs yield $h_0(\lambda_l)$,
concentrated at low eigenvalues, as the starting point
for~\eqref{eq:fixed-point}.

\paragraph{Rover update via spectral coefficient transmission.}
As the rover acquires new depth measurements $V_t$ (LIDAR, stereo, structured
light), the update proceeds as follows.  Because the lunar surface topology is
fixed on mission timescales ($\beta_0 = 1$, $\beta_1 \approx$ number of crater
rim cycles in the field of view), the eigenbasis $\Phi$ computed from the prior
DEM is shared across frames.  The rover transmits the spectral coefficient
update $\Delta \mathbf{c}_t = \Phi^\top (V_t - V_0)$ rather than raw point
positions.  For $k = 128$ modes and 3-axis coordinates, this is $128 \times 3
\times 4 = 1.5$\,KB per frame, compared to $\sim$1\,MB for raw LiDAR.

\paragraph{Phase transition alert.}
As the rover approaches a crater rim ($\beta_2$ increases by~1),
$\lambda_1$ drops and $\mathcal{H}[h^*_t]$ rises before the rim is
visually identifiable, providing advance warning to increase~$k$.

\begin{remark}[Thermal infrared camera as Planck kernel decoder]
\label{rem:thermal-camera}
A TIR camera produces a scalar temperature field by inverting the Planck
function $I(\nu,T)\propto h^*_\nu = (\exp(h\nu/k_BT)-1)^{-1}$,
which is the MaxCal fixed point of the photon gas
(Appendix~\ref{app:unification}).
When the recovered field $\hat{T}_{\mathrm{surface}}(x,t)$ is incorporated
into the distinction dynamics equation~\eqref{eq:matter} as an additional
spectral coefficient trajectory, thermal changes that alter graph topology
(e.g.\ thermal erosion opening a channel cycle) register as
$\Delta\beta_1>0$ and inherit the conservation budget of
Theorem~\ref{thm:topo-conservation}.
The joint TIR$+$geometry bandwidth allocation --- how to partition $k$ modes
between thermal and geometric channels --- is not derived here and is deferred.
\end{remark}

\subsection{River Channel Detection on Mars and Titan}
\label{sec:channels}

Ancient fluvial networks on Mars and Titan are topological features:
a drainage network with $n$ independent junctions has
$\beta_1 \geq n-1$ independent cycles, predicting a characteristic
spectral signature.

\begin{proposition}[River channel spectral signature]
\label{prop:channel-sig}
Let $G$ be a terrain graph over a planetary surface patch.  A region
containing an ancient river channel network of order $n$ (i.e., $\beta_1 \geq n-1$
independent cycles) satisfies, at the MaxCal fixed point $h^*$:
\begin{enumerate}
  \item[(i)] \textbf{Fiedler concentration}: $\mathcal{H}[h^*] < \mathcal{H}_{\mathrm{flat}}$,
    where $\mathcal{H}_{\mathrm{flat}}$ is the spectral entropy of featureless flat terrain.
    (Mass concentrates at low-$\lambda$ modes encoding long-range connectivity.)
  \item[(ii)] \textbf{Elevated curl energy}:
    $E_{\mathrm{curl}} = \|B_2\omega^*\|^2 > E_{\mathrm{curl,flat}}$.
    (Channel bifurcations force non-trivial circulation $\omega^* \neq 0$
    in the Hodge decomposition of the drainage flow field.)
  \item[(iii)] \textbf{Anomalous $\beta_1$}:
    $\beta_1 \geq n - 1 > \beta_1^{\mathrm{flat}} = 0$.
    (The channel network introduces $n-1$ or more topological handles
    absent from undissected plateau terrain.)
\end{enumerate}
The joint diagnostic $\mathcal{D}_{\mathrm{channel}} = [\mathcal{H}[h^*] < H^*]
\land [E_{\mathrm{curl}} > E^*_c] \land [\beta_1 \geq \beta_1^*]$
is a \emph{necessary, not sufficient} condition for a fluvial network
of order $\geq n$; non-fluvial structures (lava tubes, tectonic fractures)
can also satisfy all three criteria.
Disambiguating such cases is a homotopy-class identification problem; word
invariants on augmented graphs provide a candidate non-spectral route, after
problem-specific mapping from terrain reconstruction to the path-space setting
used in~\cite{bhattacharya2017,bhattacharya2013invariants}.
\end{proposition}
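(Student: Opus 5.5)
The plan is to prove the three criteria separately, each as a comparison against the flat reference terrain. The joint diagnostic $\mathcal{D}_{\mathrm{channel}}$ then holds by conjunction once the thresholds $H^*$, $E^*_c$, $\beta_1^*$ are set at the flat-terrain values ($\beta_1^*=n-1$). The "necessary, not sufficient" qualifier will be handled at the end by exhibiting a non-fluvial graph with the same cycle structure.

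I start with (iii), since it is purely combinatorial. By hypothesis the channel network of order $n$ contributes $n-1$ independent cycles. These lie in $\ker(B_1)$ but not in $\operatorname{im}(B_2)$, because channels are not filled by faces of the terrain complex. Each of them therefore gives an independent element of $\ker(L_1)$ via the Hodge decomposition~\eqref{eq:hodge-decomp}, so $\beta_1=\dim\ker(L_1)\geq n-1$. Undissected plateau terrain is a triangulated disk, which is contractible, so $\beta_1^{\mathrm{flat}}=0$. I would check the Euler identity $\beta_0-\beta_1+\beta_2=|V|-|E|+|F|$ for consistency: removing the $n-1$ channel interior faces lowers $|F|$ by exactly the gain in $\beta_1$.

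For (ii), I take the drainage flow as an edge signal $f$ and project it onto $\operatorname{im}(B_2)$. That projection is $B_2\omega^*$, where $\omega^*$ is the least-squares solution of $L_2\omega=B_2^\top f$. The argument is that at a bifurcation the flow splits around a face, so $B_2^\top f\neq 0$ on faces adjacent to channel junctions. This forces $\omega^*\neq 0$, and by orthogonality of~\eqref{eq:hodge-decomp} we get $E_{\mathrm{curl}}=\|B_2\omega^*\|^2>0$. Flat terrain carries a pure gradient flow $f=B_1^\top\psi$ (downhill along a potential), so $E_{\mathrm{curl,flat}}=0$.

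For (i), I would use the fixed-point formula~\eqref{eq:fixed-point} with the mode-separable source of Proposition~\ref{prop:hodge-decomp}, $w_l=c_l^2$. The steps are:
\begin{enumerate}
\item The $\beta_1$ cycle-carrying directions are concentrated in low-$\lambda$ modes, using A2 from Theorem~\ref{thm:topo-conservation}.
\item The weights $w_l$ are therefore skewed toward those modes, relative to the near-uniform spectrum of flat terrain.
\item Since $h^*$ is monotone in $\mathcal{T}_l$, the normalised distribution $h^*/\sum h^*$ is non-uniform.
\item Spectral entropy is maximised by the uniform distribution, which gives $\mathcal{H}[h^*]<\mathcal{H}_{\mathrm{flat}}$.
\end{enumerate}
This is the main obstacle. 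It needs both a convention for the flat reference spectrum and a way to turn "skewed weights" into a strict entropy gap, and the direction of the monotonicity is delicate because the Gaussian MI source lowers $h^*$ where $w_l$ is large. I would first try a majorization argument: show that the channel-terrain $h^*$ vector is majorized-dominated relative to the flat one, then invoke Schur-concavity of entropy. If that fails, I would state (i) conditionally on A2 together with this ordering assumption.

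Finally, to establish non-sufficiency, I would construct a lava-tube or fracture graph with the same $B_1,B_2$ incidence as a channel network. All three criteria depend only on the complex and the flow, so they coincide for the two graphs, and the spectral diagnostic cannot separate them.
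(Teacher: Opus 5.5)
\textbf{(ii) contains a step that fails.} A bifurcation describes the divergence $B_1 f$ at the junction vertex, not circulation. Any gradient flow $f=B_1^\top\psi$, however branched, satisfies $B_2^\top f=(B_1B_2)^\top\psi=0$ on every face, so $\omega^*=0$. For example, on a face $[a,b,c]$ with an even split $f_{ab}=f_{ac}$ and $f_{bc}=0$, the circulation $f_{ab}+f_{bc}-f_{ac}$ vanishes. You model flat drainage as downhill along a potential. Channel drainage driven by the same elevation is then equally a gradient flow, and your argument gives $E_{\mathrm{curl}}=E_{\mathrm{curl,flat}}=0$.

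Your (ii) also conflicts with your (iii). The loops giving $\beta_1\geq n-1$ are, by your construction, not face boundaries. A flow circulating around them is recorded in $f_{\mathrm{harm}}$, not in $\|B_2\omega^*\|^2$. Curl energy requires nonzero net circulation around some \emph{filled} face, and that cannot be derived from the topology; it must be assumed.

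The paper's sketch makes exactly this assumption. It fixes the graph construction (edges along inferred flow directions, e.g.\ D8). It assigns braided loops to the harmonic component. It asserts a nonzero curl component only \emph{when} the drainage field has circulation around junctions, which is also how the setup of Section~\ref{sec:exp-hodge} is built (``synthetic drainage with junction circulations'').

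\textbf{(i) breaks before the monotonicity issue you flag.} A2 is not a hypothesis of this proposition, and Remark~\ref{rem:a2-counterexample} shows it can fail. Even if granted, A2 constrains eigenvectors of $L_0$, not the scene coefficients $c_l=\phi_l^\top\psi$, so it says nothing about where $w_l=c_l^2$ is concentrated.

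Nor is the flat reference near-uniform under $w_l=c_l^2$. Featureless flat terrain has a constant (or planar) signal, so its weight sits in the lowest mode(s). Its $h^*$ is therefore not the uniform entropy maximiser you compare against.

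Finally, the Gaussian MI source lowers $h^*_l$ where $w_l$ is large. Skewing $w$ toward low $\lambda$ therefore depletes exactly those modes, pushing mass \emph{away} from low $\lambda$, contrary to the Fiedler-concentration reading. The sign of $\mathcal{H}[h^*]-\mathcal{H}_{\mathrm{flat}}$ is left uncontrolled, and there is no reason to expect the majorization you want.

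The paper uses a different mechanism. The channel network alters the graph itself (added connectivity along channels), and hence the spectrum $\lambda_l$ feeding $h_0(\lambda_l)$. This compresses low-eigenvalue mass, as in the phase-transition diagnostic of~\cite{das2026spectral}. The paper offers this only as a sketch, consistent with the proposition's Tier-2 status.

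Your fallback of stating (i) conditionally is the honest outcome. The condition should concern this spectral effect of the channel graph, not A2 plus a weight ordering. Your treatment of (iii) and of non-sufficiency is fine and matches the paper.
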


\begin{proof}[Proof sketch]
  Item (i): A branching channel network creates high-degree connectivity
  among nodes along the channel (many path-connected components merge
  into one), compressing the low-eigenvalue spectral mass --- the same
  mechanism as the phase-transition diagnostic of~\cite{das2026spectral}.
  Item (ii): Assumes the terrain graph is constructed with edges along
  inferred flow directions (e.g.\ D8 or equivalent routing on DEMs).
  Under this construction, a branching tree has $\beta_1 = 0$
  (no independent cycles), but a network with braided channels or closed
  loops has $\beta_1 \geq 1$ and a non-zero harmonic component; the curl
  component is non-zero when the drainage flow field has circulation around
  channel junctions. If a different graph construction is used (e.g.\
  $k$-NN proximity), the curl-energy claim requires re-verification.
  Item (iii): Follows directly from the definition of $\beta_1$ for a
  graph with $n$ independent junctions.
\end{proof}

\begin{remark}[Comparison to existing methods]
  Standard channel detection uses curvature thresholds or flow
  accumulation~\cite{howard2005}.  Proposition~\ref{prop:channel-sig}
  is complementary: $O(N)$ after eigenpairs, and sensitive to network
  topology rather than only local relief.
\end{remark}

Earth debris-flow events~\cite{coe2008} provide a dynamic analog with pre/post
DEMs available for validation under Q14.

\subsection{Bandwidth-Constrained Reconstruction Protocol}
\label{sec:protocol}

Algorithm~\ref{alg:planetary} formalizes the end-to-end protocol for
planetary digital twin maintenance under SWaP constraints.

\begin{algorithm}[tbp]
\caption{Bandwidth-Constrained Planetary Surface Reconstruction}
\label{alg:planetary}
\begin{algorithmic}[1]
\REQUIRE Prior DEM $V_0$, graph $G$ with Laplacian $L_0$, bandwidth budget $B$,
         topology requirement $\beta_0, \beta_1$
\ENSURE Topology-preserving digital twin $\hat{V}_t$ at Earth receiver
\STATE Compute $(\Phi, \Lambda) \leftarrow$ eigenpairs of $L_0$ (first $k_{\max}$ modes)
\STATE Compute $\delta_k$ from eigenvalue spacing and $\ell_{\min}$ from the shortest homologically nontrivial cycle
\STATE Set $k_{\min} \leftarrow \beta_0 + \beta_1 + \lceil C_1 / (\ell_{\min}^{\,2}\,\delta_k) \rceil$
       \COMMENT{Theorem~\ref{thm:topo-conservation} + Remark~\ref{rem:a2-counterexample} augmentation;
                when $C_1/(\ell_{\min}^{\,2}\,\delta_k)\lesssim 1$ this reduces to
                $\beta_0+\beta_1$}
\STATE Compute $h_0(\lambda_l) \leftarrow \exp(-\lambda_l \tau_{\mathrm{prior}})$ from prior DEM
\FOR{\textbf{each rover observation cycle}}
  \STATE Acquire depth scan $V_t$
  \STATE Compute $\mathbf{c}_t \leftarrow \Phi^\top V_t$ \COMMENT{distinction dynamics eq.~\eqref{eq:coeff-dynamics}}
  \STATE Run fixed-point iteration on $h^*_t$ given $w_l = c_{l,t}^2$
    \COMMENT{field eq.~\eqref{eq:fixed-point}}
  \STATE Compute spectral entropy $\mathcal{H}[h^*_t]$ and gap $\Delta'(h^*_t)$
  \STATE Select $k_t$ modes: $k_{\min}$ obligate + remaining budget via~\eqref{eq:budget-allocation}
  \STATE Compute per-mode residuals
    $D_{m,t} \leftarrow \left|-\frac{1}{h^*_{m,t}} + \frac{\mu_2 w_{m,t}}{2(\sigma^2+h^*_{m,t})^2}\right|$
    for transmitted modes $m \le k_t$
  \STATE Define leakage diagnostic
    $D_t \leftarrow \sum_{m=1}^{k_t} D_{m,t}$
    \COMMENT{design-time surrogate; requires calibration for deployment}
  \IF{$\mathcal{H}[h^*_t] < H^*$ \textbf{or} $\Delta'(h^*_t) < \Delta^*$}
    \STATE Alert: geological boundary detected; increase $k$ toward $k_{\min} + \delta$
  \ENDIF
  \IF{$D_t$ exceeds a preset bound \textbf{or} $k_t < k_{\min}$}
    \STATE Flag: update may be \emph{representation-limited} rather than genuinely scene-changing
  \ENDIF
  \STATE Transmit $\Delta\mathbf{c}_t = \mathbf{c}_t - \mathbf{c}_{t-1}$ (first $k_t$ components)
  \STATE \textbf{At Earth:} reconstruct $\hat{V}_t = \Phi_{k_t}\,\mathbf{c}_{t,1:k_t}$
\ENDFOR
\end{algorithmic}
\end{algorithm}

\paragraph{Computational cost.}
The eigenpair computation (line~1) is a \emph{one-time prior cost}: for
large meshes, Nystr\"{o}m extension (Experiment~3) runs 7--18\,s for
$|V|\approx 465$k on a standard workstation.
\emph{Per-frame cost} is $O(k \cdot |V|)$ for the spectral projection
$\mathbf{c}_t \leftarrow \Phi_{k_t}^\top V_t$ plus $O(k)$ for the fixed-point
solve; this is dominated by the projection and is negligible compared
to the eigendecomposition.

\paragraph{Design intent.}
Topology preservation follows from Theorem~\ref{thm:topo-conservation}
when $k_t\geq k_{\min}$.  The spectral-entropy alert and $D_t$ check are
theorem-informed monitors (the latter pending calibration under Q14),
not proved guarantees.  In deployment, $D_t$ uses mode-varying weights
$w_{m,t}=c_{m,t}^2$; the uniform $w_l=1$ appears only in the $P_8$
verification setting.

\section{Numerical Results}
\label{sec:numerics}

\subsection{Setup}
All experiments use the \texttt{kernelcal.geo3d} subpackage~\cite{kernelcal2026},
specifically the \texttt{hodge}, \texttt{topology}, \texttt{spectral\_codec},
and \texttt{large\_mesh} modules.  We use two surface graphs:

\paragraph{Terrain mesh: \texttt{artburysol175.obj}.}
$|V| = 465{,}383$, $|F| = 673{,}309$.  Blender~4.2 export of a real-terrain
elevation model (Artbury Sol 175 survey).  Eigenpairs computed via Nystr\"{o}m
extension with $V_{\mathrm{coarse}} = 1500$, total runtime 7--18\,s.

\paragraph{Synthetic planetary surface with drainage.}
$N = 512$ nodes.  A flat terrain graph $G_{\mathrm{flat}}$ and a channeled
variant $G_{\mathrm{ch}}$ in which a spanning-tree channel network with
$\beta_1 = 3$ independent junctions is embedded by adding cross-links.

\subsection{Experiment 1 — Jacobian Trace at Fixed Point}
\label{sec:exp-jacobian}

This is a compact numerical confirmation of
Proposition~\ref{prop:jacobian-div} on $P_8$
($\sigma^2=1$, $\mu_2=2$, $w_l=1$).  At
$h^* = 0.1547\,\mathbf{1}$ we obtain
$\mathrm{tr}(DF)|_{h^*}\approx 0.905>0$ while $\rho\approx 0.116<1$,
so the fixed-point iteration is convergent but non-volume-preserving in
$\mathcal{K}_{\mathrm{graph}}$.  Setting $\mu_2=0$ recovers the
conservative limit $\mathrm{tr}(DF)=0$.

\subsection{Experiment 2 — Hodge Decomposition of Synthetic Drainage}
\label{sec:exp-hodge}

On $G_{\mathrm{ch}}$ (512 nodes, $\beta_1=3$), synthetic drainage with
junction circulations yields
$E_{\mathrm{grad}}=0.71\pm0.02$,
$E_{\mathrm{curl}}=0.22\pm0.01$,
$E_{\mathrm{harm}}=0.07\pm0.01$.
On $G_{\mathrm{flat}}$ ($\beta_1=0$), $E_{\mathrm{curl}}<10^{-6}$.
Thus curl energy is $\sim22\times$ the flat baseline, consistent with
Proposition~\ref{prop:channel-sig}(ii).
This experiment uses one synthetic graph family and one channelized setting;
the result is therefore a consistency check, not a universality claim.
A multi-point $\beta_1$ sweep is part of the Q14 benchmarking agenda.

\subsection{Experiment 3 — Topology Collapse under Sub-threshold Compression}
\label{sec:exp-topo}

The operative result is the \emph{collapse behavior}: compressing below
$k_{\min}$ silently destroys topological structure.

\paragraph{Stress test (main result).}
On $G_{\mathrm{ch}}$ ($\beta_1=3$, $k_{\min}=4$): $\hat{\beta}_1=3$ at
$k=4$, $\hat{\beta}_1=2$ at $k=3$, and $\hat{\beta}_1=1$ at $k=2$,
matching Theorem~\ref{thm:topo-conservation}(ii).
On a crater-like region with rim loop ($\beta_1=1$, $k_{\min}=2$): $k=1$
collapses the rim cycle --- the crater is no longer representable as a
closed feature.

\paragraph{Terrain mesh baseline.}
On \texttt{artburysol175.obj}, Nystr\"{o}m gives $(\beta_0,\beta_1)=(1,0)$,
so $k_{\min}=1$; compression at $k=64,128,256$ trivially preserves topology
(no cycles to lose) with distortion $0.144,0.127,0.110$
(runtimes 7.9\,s, 15.5\,s, 17.7\,s).
This baseline confirms the Nystr\"{o}m pipeline is functional; the
non-trivial result is the collapse above.

\subsection{Experiment 4 --- Stability--Conservation Tradeoff (Route~3)}
\label{sec:exp-route3}

We run a compact verification of
Proposition~\ref{prop:stab-cons} on $P_8$
($\sigma^2=1$, $\mu_2=2$, $w_l=1$).  The fixed-point solve converges to
$h^*\approx 0.1547$ with field residual
$\|\mathcal{R}[h^*]-\mathcal{T}[h^*]\|_\infty=3.03\times 10^{-14}$.
Evaluating
$D_m=-1/h_m^*+\mu_2 w_m/(2(\sigma^2+h_m^*)^2)$ gives
$D_m=-5.712$ uniformly across modes, matching
$H_{mm}=-\Delta'$ to machine precision.
The vacuum check gives $D_m^{\mathrm{vac}}=-e\neq0$, supporting the
structural (not incidental) nature of the deficit.

\subsection{Experiment 5 --- Parametric A2 Sweep (Family A)}
\label{sec:exp-a2-sweep}

We run the parametric A2 sweep of Remark~\ref{rem:a2-counterexample} on
Family~A (symmetric figure-eight graphs: two cycles of equal length
$\ell$ sharing one articulation node), with $\beta_0=1$, $\beta_1=2$,
$k_{\min}=3$, and augmentation $\Delta k=2$ modes.
All values are computed by
\texttt{kernelcal.terrain.run\_a2\_cycle\_ratio\_sweep}~\cite{kernelcal2026}.

\begin{center}
\footnotesize
\begin{tabular}{lcccccc}
\toprule
Family & $\ell$ & $\delta_k$ & $1/(\ell^{2}\delta_k)$ & $\rho(k_{\min})$ &
$\rho$ after $\Delta k{=}2$ & rank at $k_{\min}$ \\
\midrule
A (figure-eight) & 3 & $2.0000$ & $0.056$ & $0.000$ & $0.577$ & 1 \\
A (figure-eight) & 4 & $0.5858$ & $0.107$ & $0.000$ & $0.257$ & 1 \\
A (figure-eight) & 5 & $0.3153$ & $0.127$ & $0.000$ & $0.211$ & 1 \\
A (figure-eight) & 6 & $0.1864$ & $0.149$ & $0.000$ & $0.176$ & 1 \\
A (figure-eight) & 7 & $0.1186$ & $0.172$ & $0.000$ & $0.150$ & 1 \\
A (figure-eight) & 8 & $0.0798$ & $0.196$ & $0.000$ & $0.131$ & 1 \\
\midrule
B (sep.\ control) & 3 & $\approx 0^{*}$ & --- & $0.429$ & (A2 holds) & 2 \\
B (sep.\ control) & 6 & $\approx 0^{*}$ & --- & $0.215$ & (A2 holds) & 2 \\
\bottomrule
\end{tabular}
\end{center}
\noindent{\footnotesize $^{*}$Family~B is a separated-cycle control with
bridge length~2 and $s{=}l{=}\ell$; $\delta_k$ is numerically degenerate
due to cycle-mode pairing, yet $\mathrm{rank}=\beta_1=2$ at $k_{\min}$
and no augmentation is needed.}

Symmetric figure-eights collapse to $\rho=0$ and $\mathrm{rank}=1$ at
$k_{\min}$; $\Delta k=2$ augmentation recovers $\mathrm{rank}=\beta_1=2$ in
every case.
Family~B separated-cycle controls hold A2 without augmentation.
These are scope-calibration measurements; constants $(C_1,C_2)$ remain Q30.

\subsection{Real-data confirmation}
\label{sec:exp-real-bridge}

Two real-data graph sequences (Bishop fault-scarp structural data and a
post-wildfire Tonto NF channel network sequence) were used as preliminary
external consistency checks of the conservation identity (see Data
Availability statement).
Under the adopted graph-construction and fixed-point pipeline, both are
consistent with the stability--conservation relation
$D_m=-\Delta'$.
These are consistency checks rather than deployment benchmarks; full
benchmarking remains part of Q14, including controlled fault injection and
public benchmark protocols.

\section{Discussion}
\label{sec:discussion}

\subsection{Formal closure in this paper}
\label{sec:conservation-law}

The Tier-1 results are proved; Algorithm~\ref{alg:planetary} is
mathematically justified for topology preservation under Theorem
hypotheses, while deployment thresholds ($D_t$, $H^*$, $\Delta^*$)
remain empirical.

\subsection{Structural correspondence and numerical program}

\begin{remark}[Q12: OU structure and PI$^2$ route (structural program)]
\label{rem:q12-stochastic}
Not a theorem.  In $\theta_l=\log h_l$ coordinates, $F$ linearizes to an
Ornstein--Uhlenbeck process with rate $\kappa=|\Delta'|$~\cite{risken1996};
zero probability current at $h^*$ is the detailed-balance analog of
$\nabla_\mathcal{K}\mathcal{T}_k=0$.
PI$^2$ rollouts~\cite{kappen2005,theodorou2010} and distributed subgraph
optimization~\cite{distributed2025} are candidate routes to Q12;
theorem-level closure requires geometric CARE and Bellman regularity on
$\mathcal{K}_{\mathrm{graph}}$.
\end{remark}

\subsection{Conservation of surprise (Tier-3 extension)}
\label{sec:surprise}

\emph{Tier-3 interpretive extension, not part of the proved core.}
The proved identity $D_m=-\Delta'$ motivates a candidate balance
$S_{\mathrm{env}}=S_{\mathrm{reg}}+S_{\mathrm{miss}}$, with $D_m$ as a
proxy for $S_{\mathrm{miss}}$.
Missed novelty decomposes into: (i)~unsampled (recoverable by reallocation);
(ii)~below $k_{\min}$ (irreversible loss under Thm.~\ref{thm:topo-conservation});
(iii)~sampled but leaking at rate~$\Delta'$.
Under A1--A3, a twin near $\mathcal{G}_{\mathrm{opt}}$ that preserves
$k\ge k_{\min}$ should structurally disfavor persistent blind spots;
this is a testable hypothesis linked to Q12/Q14, not a proved corollary.

\subsection{Limits}
\label{sec:limits}

The limits of~\cite{das2026spectral} apply here without modification:
no literal spacetime, the three terms of $\mathcal{T}[k]$ do not equal
$T_{\mu\nu}$, global geometry results hold only in $\mathcal{K}_{\mathrm{graph}}$,
and the triple coincidence remains unproven.

Additional limits specific to this paper:
\begin{enumerate}
  \item \textbf{Static topology assumed.} The distinction dynamics
    equation~\eqref{eq:coeff-dynamics} requires a fixed eigenbasis $\Phi$;
    edge rewiring, remeshing, and adaptive-neighborhood updates are outside
    scope.
  \item \textbf{Nystr\"{o}m topology error is unquantified.}
    A bound on $|\hat{\beta}_1 - \beta_1|$ as a function of
    $|V_{\mathrm{coarse}}|$ and surface curvature is open (Q10).
  \item \textbf{Leakage diagnostics are not deployment-benchmarked.}
    $D_m$ and surrogate $D_t$ are theoretically motivated but not yet
    validated under controlled sensor drift or registration perturbations (Q14).
\end{enumerate}

\subsection{Open Problems}
\label{sec:open-problems}

\paragraph{Scope note.}
Proved core: Prop.~\ref{prop:stab-cons}, Thm.~\ref{thm:topo-conservation},
Alg.~\ref{alg:planetary}.
Three open problems define the priority validation agenda:

\textbf{Q10} (Nystr\"{o}m topology error): Bound $|\hat{\beta}_1 - \beta_1|$
as a function of $|V_{\mathrm{coarse}}|$, minimum cycle length, and Gaussian
curvature; verify monotone tightening under refinement and agreement against a
non-spectral topological reference~\cite{bhattacharya2017,bhattacharya2015persistent,bhattacharya2013invariants,yadokoro2024weighted}.
\emph{Pass:} explicit bound contracting under refinement.
\emph{Fail:} no bound stronger than empirical fit, or no contraction.

\textbf{Q12} (Riccati gain cancellation): Prove or disprove $p_m=2|\Delta'|$
in $\theta=\log h$ coordinates at $h^*$ (see Remark~\ref{rem:q12-stochastic}
and~\cite{distributed2025,furstenberg1960} for candidate paths).
If proved: $\nabla_\mathcal{K}\mathcal{T}_k=0$ for $\mathcal{G}_{\mathrm{opt}}$.
\emph{Pass:} modewise CARE-gain agreement within tolerance.
\emph{Fail:} stable counterexample after solver checks.

\textbf{Q14} (Scene-faithful twin verification): Inject geometry drift,
registration error, and topology-reducing compression; test whether $D_m$
separates true scene change from leakage.
\emph{Pass:} calibrated thresholds with reported FPR/FNR.
\emph{Fail:} no threshold family beats chance.

Extended directions (Q19--Q30) are deferred to
Appendix~\ref{app:extended-problems}.

\section*{Data Availability}

Synthetic graph experiments (Experiments 1--5) are fully reproducible
via the \texttt{kernelcal} package~\cite{kernelcal2026} at
\url{https://github.com/darknight-007/kernelcal}.
Experiment~5 uses \texttt{kernelcal.terrain.run\_a2\_cycle\_ratio\_sweep};
the parametric sweep script is \texttt{run\_a2\_sweep.py} in the same repository.
The Bishop fault-scarp and Tonto NF post-wildfire channel network
sequences used in Section~\ref{sec:exp-real-bridge} are internal
DREAMS-lab datasets currently under pre-publication processing;
public archival identifiers will be provided upon deposit.
These datasets are used only as preliminary consistency checks and
not as the basis for any formal claim in the paper.

\section*{Acknowledgments}

The author's orientation toward $\beta_1$ cycle structure and homotopy-class
methods as first-class objects in robot representation was shaped during time
at the University of Pennsylvania (2014--2018), where the topological robotics
framework reflected in~\cite{bhattacharya2017,bhattacharya2015persistent,bhattacharya2013invariants}
formed part of the intellectual environment.
The specific connections to spectral kernel dynamics are recent and
independently derived.


\appendix
\section{Unification Hierarchy of $h^*$}
\label{app:unification}

The fixed-point equation $h^*=h_0\exp(-1-\mathcal{T}_l[h^*])$
subsumes the following distributions as special cases:

\begin{center}
\small
\begin{tabular}{lll}
\toprule
Distribution & System & Source $\mathcal{T}_l$ \\
\midrule
Boltzmann & Classical gas & $\lambda_l/k_BT - 1$ \\
Planck & Photon gas & $k_BT$, bosonic occupation \\
Fermi--Dirac & Fermion gas & $k_BT$, exclusion (\emph{open, Q27}) \\
Cowan--Farquhar & Stomata & $\psi_l/\gamma - 1$ \\
GP posterior & Robotic imaging & spectral MI \\
\bottomrule
\end{tabular}
\end{center}

For the Boltzmann case: $h_0=1/Z$, $\mathcal{T}_l=\lambda_l/k_BT-1$ gives
$h^*=\frac{1}{Z}e^{-\lambda_l/k_BT}$.
Statistical mechanics is a \emph{restriction} of the MaxCal field equation to a
source linear in eigenvalue --- the framework did not borrow from Boltzmann;
Boltzmann is $h^*$ under specific source conditions.
For Planck: bosonic occupation encoding in $\mathcal{T}_l$ yields
$h^*_\nu=(e^{h\nu/k_BT}-1)^{-1}$.
Fermi--Dirac (exclusion $h_l\leq1$) and quantum necessity (Q29) remain open.

\section{Extended Open Problems (Q19--Q30)}
\label{app:extended-problems}

\paragraph{LQR--LQE duality identity (used in Q19--Q21 and Q22).}
Under the LQG symmetry condition ($Q=W/\alpha$, $R=V\alpha$), the
primal CARE (LQR) solution $P$ and dual CARE (Kalman) solution $\Sigma$
satisfy
\begin{equation}
  P\,\Sigma = I, \quad p_m\,\sigma_m = 1 \;\; \forall m
  \label{eq:p-sigma-duality}
\end{equation}
at $h^*$~\cite{wonham1968}.  Q19 asks for the numerical verification
of $\sigma_m\approx\tfrac12$ on $P_8$.

\textbf{Q19} Check $\sigma_m\approx\tfrac12$ on $P_8$ from GP posterior.
\textbf{Q20} Off-diagonal $\|P\Sigma-I\|_F$ scales with
$\|k_{\mathrm{cross}}\|_{\mathrm{HS}}$.
\textbf{Q21} Is GP-UCB acquisition recoverable from $K_f=\Sigma C^\top V^{-1}$?

\textbf{Q22} Is the MaxCal update multiplicative for Furstenberg--Kesten
(Route~4 gating check)?
\textbf{Q23} Does Route~4 CLT hold for Cowan--Farquhar source?
\textbf{Q24} Is there a Galton--Watson process on spectral modes with critical
point $\sigma_m=\tfrac12$?

\textbf{Q28} Decoherence functional form: $\sigma_m(\epsilon)=1/(2(1-\epsilon)^2)$
under mixed-state illumination of the thermal-IR sensor;
$\epsilon\sim e^{-E_c/k_BT}$.
\textbf{Q29} Quantum necessity: prove no classical process achieves
$\sigma_m=\tfrac12$ exactly.

\textbf{Q30} (Parametric A2-failure constants): Establish $(C_1,C_2)$ in
$\rho(k)\geq 1-C_1/(\ell_{\min}^{\,2}\delta_k)-C_2(\gamma-1)$
(Remark~\ref{rem:a2-counterexample}).
\emph{Pass:} Experiment~5 residuals within $20\%$ for $\gamma\leq 2$, $\beta_1\leq 3$.
\emph{Fail:} fit diverges for $\gamma>2$ or $\beta_1\geq 3$.

\end{document}